\def\rnum#1{\expandafter{\romannumeral #1}}
\def\Rnum#1{\uppercase\expandafter{\romannumeral #1}} 
\theoremstyle{plain}
\newtheorem{thm}{Theorem}[section]
\newtheorem{cor}{Corollary}[section]
\newtheorem{lem}{Lemma}[section]
\newtheorem{rem}{Remark}[section]
\newtheorem{Def}{Definition}[section]
\newtheorem*{thmA}{Theorem A}
\newtheorem*{proA}{Proposition A}
\newcommand{\Rn}{\mathbb{R^{\textit{n}}}}
\newcommand{\N}{\mathbb{N}}
\newcommand{\Z}{\mathbb{Z}}
\newcommand{\R}{\mathbb{R}}
\newcommand{\dis}{\displaystyle}
\newcommand{\vep}{\varepsilon}
\newcommand{\lan}{\langle}
\newcommand{\ran}{\rangle}
\newcommand{\Sp}{\mathcal{S}}
\newcommand{\Lam}{\Lambda}
\newcommand{\calM}{\mathcal{M}}
\newcommand{\what}{\widehat}
\newcommand{\ho}{\"} 
\newcommand{\wti}{\widetilde}
\newcommand{\tn}{\textnormal}
\newcommand{\nn}[1]{{\left\vert\kern-0.25ex\left\vert\kern-0.25ex\left\vert #1 
    \right\vert\kern-0.25ex\right\vert\kern-0.25ex\right\vert}} 
\begin{document}

\title{Two-weight inequality for the heat flow and solvability of Hardy-H\'enon parabolic equation}
\author{Yohei Tsutsui \footnote{Department of Mathematics, Graduate School of Science, Kyoto University, Kyoto 606-8502, Japan. \newline e-mail: \texttt{ytsutsui@math.kyoto-u.ac.jp}}}
\date{}
\maketitle

\begin{abstract}
In this article, we provide two-weight inequalities for the heat flow on the whole space by applying the sparse domination.
For power weights, such inequalities were given by several authors.
Owing to the sparse domination, we can treat general weights in Muckenhoupt classes.
As a application, we present the local and global existence results for the Hardy-H\'enon parabolic equation, which has a potential belonging to a Muckenhoupt class.
\end{abstract}

{\bf Keywords} ,\ Two-weight inequality,\ Sparse domination,\ heat flow,\ Hardy-H\'enon parabolic equation.

{\bf 2020 Mathematics Subject Classification} 42B37, 35K15.

\section{Introduction} \label{intro}
The purpose of this paper is twofold.
The first is to provide two-weight inequalities for the heat flow;
\begin{equation} \label{two heat}
\left\| e^{t\Delta}f \right\|_{L^q(w)} \lesssim t^{-\gamma/2} \|f\|_{L^p(\sigma)}
\end{equation}
where the weights $\sigma$ and $w$ belong to Muckenhoupt classes.
The smoothing estimate \eqref{two heat} with power weights $\sigma$ and $w$ was proved by several authors, see \cite{T, O-T, B-T-W, C-I-T2, T-W, C-I-T-T} for example.
Their method is based on the homogeneity of the weighted function spaces, and thus (\ref{two heat}) was reduced to the inequality for the convolution operator with Gaussian.
We need a different approach for the case where the weights $\sigma$ and $w$ are not of power type.
We focus on the fact that the heat flow is a type of smoothing operator.
This indicates that the heat flow is dominated by the fractional integral operator $I_\alpha$;
\[
I_\gamma f(x) := \int_\Rn \dfrac{f(y)}{|x-y|^{n-\gamma}} dy = \mathcal{F}^{-1} \left[|\xi|^{-\gamma} \what{f}(\xi) \right](x)
\]
where ${\what{}}$ and $\mathcal{F}^{-1}$ are the Fourier transform and its inverse, respectively.
Indeed, we make use of a more useful operator $\Lambda_{\Sp}^\gamma$, so-called \textit{fractional sparse operator}, involving a sparse family $\Sp$.
Then, the desired estimate \eqref{two heat} is reduced to one for $\Lambda_\Sp^\gamma$.
Such estimate was already established by Fackler and Hyt\ho{o}nen \cite{F-H}, see Theorem A in Section 2.

The operator $\Lambda_{\Sp} = \Lambda_{\Sp}^0$ is defined by
\[
\Lambda_{\Sp}f := \sum_{Q \in \Sp} \lan f \ran_Q \chi_Q,
\]
and is called \textit{sparse operator}.
This operator appeared in a series of studies on the $A_2$-conjecture, which was solved by Hyt\ho{o}nen \cite{H Ann}.
The sparse operator is very useful for providing pointwise bounds for several operators such as
\begin{equation} \label{pointwise}
|Tf| \lesssim \sum_{a \in \{0,1,2\}^n} \Lambda_{\Sp^a}f,
\end{equation}
with the sparse families $\{\Sp^a\}_a$ depending on $T$ and $f$.
An important property of the sparse operator is the fact that a sharp weighted estimate can be derived from the pointwise bound \eqref{pointwise}.
Indeed, for Calder\'on-Zygmund operator $T$ \cite{H Ann}, the pointwise bound \eqref{pointwise} holds and then we have that
\[
\|T\|_{L^2(w) \to L^2(w)} \lesssim [w]_{A_2},
\]
where the order $1$ of the $A_2$-constant $[w]_{A_2}$ is optimal.
An extrapolation theorem due to Dragic$\check{\tn{e}}$vi\'c, Grafakos, Pereyra and Petermichl \cite{D-G-P-P} yields the sharp $L^p(w)$-bound with the optimal order $\max(1,1/(p-1))$ of the $A_p$-constant.
It is also known that the Littlewood-Paley square function is dominated by a variant of the sparse operator, see Lerner \cite{L}.
Moreover, for example such as maximal spherical operator, pseudo-differential operators and Bochner-Riesz means, the sparse form domination of the form
\begin{equation} \label{sparse form}
\left| \lan Tf,g \ran \right| \lesssim \sum_{a \in \{0,1,2\}^n} \lan f \ran_{r,Q} \lan g \ran_{s,Q} \ \tn{with}\ \lan f \ran_{p,Q} := \left(\dfrac{1}{|Q|} \int_Q |f|^p dx \right)^{1/p}
\end{equation}
with some $1 \le r,s \le \infty$, holds.
See Benea, Bernicot and Luque \cite{B-B-L}, Beltran and Cladek \cite{B-C} and Lacey \cite{Lacey}, respectively.
The estimate \eqref{sparse form} leads to the $L^p(w)$-boundedness for $T$ for all $p \in (r,s^\prime)$, under some condition on the weight $w$.
The weight condition can be described with Muckenhoupt classes.

\medskip

Our first main result is the $L^p(\sigma)-L^q(w)$ inequality for the heat flow.
The proof consists of a pointwise bound for the heat flow by fractional sparse operators and the mapping property of such sparse operator by Fackler and Hyt\ho{o}nen \cite{F-H}, see Section \ref{heat section}.
\begin{thm} \label{main 1}
\tn{(i)} Let $1<p \le q < \infty,\ n(1/p-1/q) \le \gamma < n$ and $\sigma^{1-p^\prime}, w \in A_\infty$.
Then, we have that for $f \in L^p(\sigma)$ satisfying $\lan |f| \ran_Q \to 0$ as $|Q| \to \infty$,
\[
\sup_{t>0} t^{\gamma/2} \left\| e^{t\Delta}f \right\|_{L^q(w)} \lesssim \left[\sigma^{1-p^\prime}, w \right]_{A^\gamma_{p,q}} \|f\|_{L^p(\sigma)},
\]
where the implicit constant depends on $|\sigma^{1-p^\prime}|_{A_\infty}$ and $|w|_{A_\infty}$.

\medskip

\tn{(ii)} Let $1<p<\infty$ and $\sigma^{1-p^\prime} \in A_\infty$.
We assume that there are $p \le q < \infty$ and $n(1/p-1/q) \le \gamma < n$ such that $\sigma^{-1/p} \in \calM^{n/\ell}_{p^\prime}$ with $\ell=\gamma - n(1/p-1/q)$.
Then, we have that for $f \in L^p(\sigma)$ satisfying $\lan |f| \ran_Q \to 0$ as $|Q| \to \infty$,
\[
\sup_{0<t<\infty} t^{(\gamma + n/q)/2} \left\| e^{t\Delta}f \right\|_{L^\infty} \lesssim \left\| \dfrac{1}{\sigma^{1/p}} \right\|_{\calM^{n/\ell}_{p^\prime}} \|f\|_{L^p(\sigma)},
\]
where the implicit constant depends on $|\sigma^{1-p^\prime}|_{A_\infty}$.
\end{thm}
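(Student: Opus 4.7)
The two parts call for different methods. For (i) the plan is to reduce the smoothing estimate to a pointwise sparse bound and then invoke Theorem A. For (ii), since the target is $L^\infty$, a direct H\"older argument paired with the Morrey hypothesis is cleaner than going through any sparse machinery.

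For part (i), the central step is the pointwise domination
\[
t^{\gamma/2} \left| e^{t\Delta}f(x) \right| \lesssim M_\gamma f(x) \lesssim \Lambda_{\Sp}^{\gamma} |f|(x)
\]
for some sparse family $\Sp = \Sp(f)$, where $M_\gamma$ is the fractional maximal operator. Starting from $G_t(x-y) = (4\pi t)^{-n/2}\exp(-|x-y|^2/(4t))$, I would decompose $\Rn$ into the central ball $B(x,\sqrt{t})$ and dyadic annuli $A_k = \{2^k\sqrt{t} \le |x-y| < 2^{k+1}\sqrt{t}\}$. On $A_k$ the Gaussian satisfies $G_t(x-y) \lesssim t^{-n/2}e^{-c 4^k}$, so
\[
|e^{t\Delta}f(x)| \lesssim \sum_{k \ge 0} e^{-c 4^k} \lan |f| \ran_{B(x, 2^{k+1}\sqrt{t})} \le t^{-\gamma/2} M_\gamma f(x) \sum_{k\ge 0} 2^{-k\gamma} e^{-c 4^k},
\]
using $\lan |f| \ran_{B(x,r)} \le r^{-\gamma} M_\gamma f(x)$ in the second step and the super-exponential decay of the Gaussian to make the series converge. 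Comparing ball averages with finitely many shifted dyadic grids and invoking the well-known pointwise sparse domination of $M_\gamma$ gives the $\Lambda_\Sp^\gamma$-bound; the hypothesis $\lan|f|\ran_Q \to 0$ as $|Q| \to \infty$ is what allows the principal-cube construction of $\Sp$ to cover all of $\Rn$. Theorem A then converts this pointwise bound into the claimed $L^p(\sigma) \to L^q(w)$ estimate with the asserted $[\sigma^{1-p^\prime},w]_{A^\gamma_{p,q}}$-constant.

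For part (ii), I would factor $G_t(x-y)f(y) = \lb G_t(x-y) \sigma^{-1/p}(y) \rb \lb \sigma^{1/p}(y) f(y) \rb$ and apply H\"older with exponents $p^\prime, p$:
\[
|e^{t\Delta}f(x)| \le \left\| G_t(x-\cdot)\sigma^{-1/p} \right\|_{L^{p^\prime}(\Rn)} \|f\|_{L^p(\sigma)}.
\]
Using the same annular decomposition together with the Morrey hypothesis $\sigma^{-1/p} \in \calM^{n/\ell}_{p^\prime}$,
\[
\int_{A_k} G_t(x-y)^{p^\prime} \sigma^{-p^\prime/p}(y) \, dy \lesssim t^{-np^\prime/2} e^{-c p^\prime 4^k} (2^k\sqrt{t})^{n - \ell p^\prime} \left\|\sigma^{-1/p}\right\|_{\calM^{n/\ell}_{p^\prime}}^{p^\prime}.
\]
Summing the rapidly convergent series in $k$ and using $\ell = \gamma - n(1/p - 1/q)$ yields
\[
\left\| G_t(x-\cdot)\sigma^{-1/p} \right\|_{L^{p^\prime}(\Rn)} \lesssim t^{-n/(2p) - \ell/2} \left\|\sigma^{-1/p}\right\|_{\calM^{n/\ell}_{p^\prime}} = t^{-(\gamma + n/q)/2} \left\|\sigma^{-1/p}\right\|_{\calM^{n/\ell}_{p^\prime}},
\]
uniformly in $x$, which is the desired $L^\infty$-bound.

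The main obstacle is the bookkeeping in (i): one must arrange the sparse domination so that the $t$-dependence is isolated into the single factor $t^{-\gamma/2}$ and the sparse family itself depends only on $f$. Passing through $M_\gamma$, whose pointwise sparse domination is classical, is the cleanest way to achieve this, and the decay-of-averages hypothesis on $f$ is exactly what is needed to build the principal cubes on the whole space. Part (ii) is then essentially a H\"older computation, provided one is careful that the Morrey exponent $\ell$ is chosen so that the annular series converges and recombines to the claimed power of $t$.
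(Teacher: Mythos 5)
Both parts of your argument are correct; the interest lies in how they compare with the paper's. For (i) you follow essentially the same route (dyadic decomposition of the Gaussian at scales $2^k\sqrt{t}$, comparison with the shifted dyadic grids $\mathcal{D}^a$ via the Hyt\"onen--Lacey--P\'erez lemma, the stopping-cube construction producing the sparse families $\Sp^a$, and finally Theorem A), except that you interpose the fractional maximal operator $M_\gamma$ as an explicit middleman, proving $t^{\gamma/2}|e^{t\Delta}f|\lesssim M_\gamma f$ and then $M_\gamma f \lesssim \sum_{a}\Lambda_{\Sp^a}^\gamma |f|$; the paper collapses these two steps into one by passing directly from the Gaussian decomposition to $\sum_a \Lambda_{\mathcal{D}^a}^\gamma f$ and then sparsifying. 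This is a mild reorganization of the same idea, and the ``decay of averages'' hypothesis plays exactly the same role (existence of the maximal stopping cubes) in both. For (ii) your argument is genuinely different: the paper obtains the $L^\infty$-estimate by composing $e^{t\Delta}=e^{(t/2)\Delta}e^{(t/2)\Delta}$, applying part (i) to the inner factor and the unweighted $L^q\to L^\infty$ smoothing to the outer one, and then recognizing $[\sigma^{1-p'},1]_{A^\gamma_{p,q}}$ as the Morrey norm; you instead apply H\"older pointwise in $x$ to $G_t(x-\cdot)\sigma^{-1/p}$ against $\sigma^{1/p}f$ and estimate the $L^{p'}$-norm annulus by annulus using the Morrey hypothesis, with the Gaussian tail absorbing any sign of $n-\ell p'$. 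Your computation lands on the same exponent $-(\gamma+n/q)/2$ and the same constant $\|\sigma^{-1/p}\|_{\calM^{n/\ell}_{p'}}$. A small bonus of the direct argument is that it visibly does not use either the $A_\infty$-condition on $\sigma^{1-p'}$ or the decay-of-averages hypothesis, both of which enter the paper's proof of (ii) only because it routes through (i); so your version of (ii) is a slight strengthening of the statement as written.
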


See Section 2 for the definitions of $A^\alpha_{p,q}$- and $A_\infty$-constants.

\medskip

\begin{rem} \label{rem main 1}
\begin{enumerate}
\item
In the case $\sigma$ and $w$ are power weights, the inequality in (i) was proved in \cite{T, O-T, B-T-W, C-I-T2, T-W, C-I-T-T} even if $q<p$.

\item
Since we do not assume that $\sigma \in A_p$, the condition $f \in L^p(\sigma)$ does not imply the local integrability of $f$.

\item
If $\sigma \in A_p$, then the decay condition on $f$ holds for all $f \in L^p(\sigma)$.
Indeed, since $\sigma \not \in L^1$,\ we have that $\lan |f| \ran_Q \le [\sigma]_{A_p}^{1/p} w(Q)^{-1/p} \|f\|_{L^p(\sigma)} \to 0$ as $|Q| \to \infty$.

\item
In the case $\sigma=w$ in \tn{(i)}, we assume that $\sigma, \sigma^{1-p^\prime} \in A_\infty$, which is equivalent to $\sigma \in A_p$.
\end{enumerate}
\end{rem}

\medskip

Most typical examples of weights in Muckenhoupt class $A_p$ is $|x|^\alpha$ and $\lan x \ran^\alpha$.
We give sufficient conditions on such $\alpha$, which satisfy the conditions in Theorem \ref{main 1}.

\begin{cor} \label{cor}
Let $1 < p \le q < \infty,\ \alpha, \beta \in \R$ and $n(1/p-1/q) \le \gamma < n$.
We have that
\[
\sup_{t>0} t^{\gamma/2} \left\| e^{t\Delta}f \right\|_{L^q(w)} \lesssim \|f\|_{L^p(\sigma)}
\]
if one of the following holds

\tn{(i)} $\sigma(x)=|x|^{\alpha p},\ w(x)= |x|^{\beta q}$,
\[
-\dfrac{n}{q} < \beta \le \alpha < n\left(1-\dfrac{1}{p} \right) \ \tn{and} \ \gamma = n \left(\dfrac{1}{p} - \dfrac{1}{q} \right) + (\alpha - \beta).
\]

\tn{(ii)} $\sigma(x)=\lan x \ran^{\alpha p},\ w(x) = |x|^{\beta q}$,
\[
- \dfrac{n}{q} < \beta \le 0 \le \alpha \le n \left(1-\dfrac{1}{p} \right) \ \tn{and}\ n \left(\dfrac{1}{p} - \dfrac{1}{q} \right) -\beta \le \gamma \le \left(\dfrac{1}{p} - \dfrac{1}{q} \right) + (\alpha-\beta).
\]

\tn{(iii)} $\sigma(x) = |x|^{\alpha p},\ w(x) = \lan x \ran^{\beta q}$,
\[
- \dfrac{n}{q} \le \beta \le 0 \le \alpha < n \left(1-\dfrac{1}{p} \right) \ \tn{and} \ n\left(\dfrac{1}{p} - \dfrac{1}{q} \right) + \alpha \le \gamma \le n\left(\dfrac{1}{p} - \dfrac{1}{q} \right) + (\alpha - \beta).
\]

\tn{(iv)} $\sigma(x)=\lan x \ran^{\alpha p},\ w(x) = \lan x \ran^{\beta q}$,
\[
- \dfrac{n}{q} \le \beta \le \alpha \le n\left(1-\dfrac{1}{p} \right)\ \tn{and} \ n \left(\dfrac{1}{p} - \dfrac{1}{q} \right) \le \gamma \le n \left(\dfrac{1}{p} - \dfrac{1}{q} \right) + (\alpha - \beta).
\]
\end{cor}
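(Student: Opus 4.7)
The plan is, in each of the four cases, to reduce the statement to Theorem~\ref{main 1}(i), which requires two ingredients: (a) $\sigma^{1-p^\prime}, w \in A_\infty$, and (b) finiteness of the two-weight constant $[\sigma^{1-p^\prime}, w]_{A^\gamma_{p,q}}$. The decay hypothesis $\lan |f| \ran_Q \to 0$ as $|Q| \to \infty$ will follow from Remark~\ref{rem main 1}(3) once we check that $\sigma \in A_p$, which holds throughout the stated parameter ranges (for instance in (i), $q \ge p$ gives $\alpha \ge \beta > -n/q \ge -n/p$, matched by $\alpha < n(1-1/p)$).

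First I would compute $\sigma^{1-p^\prime}$ explicitly: $\sigma(x) = |x|^{\alpha p}$ gives $\sigma^{1-p^\prime}(x) = |x|^{-\alpha p^\prime}$, and $\sigma(x) = \lan x \ran^{\alpha p}$ gives $\sigma^{1-p^\prime}(x) = \lan x \ran^{-\alpha p^\prime}$. Membership $|x|^r \in A_\infty$ holds iff $r > -n$, whereas $\lan x \ran^r \in A_\infty$ for every $r \in \R$, because $\lan x \ran^r$ is bounded on cubes near the origin and reduces to the power weight $|x|^r$ on large cubes. Translating these criteria through the identities above recovers exactly the $\alpha,\beta$ endpoint restrictions listed in (i)--(iv).

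Next I would verify finiteness of $[\sigma^{1-p^\prime}, w]_{A^\gamma_{p,q}}$. In the pure power case (i) this is the classical scale computation: on a cube $Q = Q(x_0, s)$ the averages $\lan |x|^{-\alpha p^\prime} \ran_Q^{1/p^\prime}$ and $\lan |x|^{\beta q} \ran_Q^{1/q}$ are comparable to $\max(|x_0|, s)^{-\alpha}$ and $\max(|x_0|, s)^{\beta}$ when $Q$ is far from the origin, and to $s^{-\alpha}$ and $s^{\beta}$ when $Q$ contains the origin (the strict bounds $\beta > -n/q$ and $\alpha < n(1-1/p)$ ensure convergence of the origin averages). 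In either regime the supremand becomes a power of $s$ whose exponent is $\gamma - n(1/p-1/q) - (\alpha-\beta)$, and boundedness over all $s > 0$ forces the equality $\gamma = n(1/p-1/q) + (\alpha - \beta)$ stated in (i).

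For the mixed cases (ii)--(iv), the factor $\lan x \ran^r$ is comparable to $1$ on cubes contained in $\{|x| \lesssim 1\}$ and to $|x|^r$ on cubes with $|x_0| + s \gtrsim 1$. Splitting the supremum over cubes accordingly and repeating the scaling computation in each regime produces two constraints on $\gamma$ instead of one: the large-scale regime reproduces the upper endpoint $n(1/p-1/q)+(\alpha-\beta)$ coming from the pure power case, while the small-scale regime effectively replaces the corresponding $\lan x \ran$-exponent by $0$, producing the lower endpoint. This accounts for the two-sided ranges of $\gamma$ appearing in (ii)--(iv). The main obstacle is the case-by-case bookkeeping of these scale exponents across regimes, in particular confirming that boundary cubes with $|x_0| \sim s \sim 1$ do not generate a constraint stronger than the interior regimes. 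Once finiteness of the two-weight constant is secured, Theorem~\ref{main 1}(i) applied with $\gamma$ in the stated interval yields the conclusion directly.
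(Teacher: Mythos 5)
Your overall strategy matches the paper's: reduce to Theorem~\ref{main 1}(i) and then verify the three hypotheses, namely $\sigma^{1-p^\prime}\in A_\infty$, $w\in A_\infty$, and $[\sigma^{1-p^\prime},w]_{A^\gamma_{p,q}}<\infty$, by splitting cubes (balls) according to whether they are far from or close to the origin, and further according to their scale $R\lessgtr 1$ in the presence of $\langle x\rangle$-weights. The paper packages the same ball computations in Lemma~\ref{est on ball}, so the argument you outline is essentially the paper's.

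However, there is a genuine error in the $A_\infty$ step. You assert that $\langle x\rangle^r \in A_\infty$ \emph{for every} $r \in \R$, reasoning that the weight is bounded near the origin and looks like $|x|^r$ at large scales. This is false. For a ball $B(0,R)$ with $R$ large and $r<-n$, one has $\langle \langle\cdot\rangle^r \rangle_B \approx R^{-n}$, whereas $\exp\bigl(\langle \log \langle\cdot\rangle^r\rangle_B\bigr) \approx R^{r}$, and $R^{-n}\not\lesssim R^{r}$ as $R\to\infty$; so $\langle x\rangle^r\notin A_\infty$ when $r<-n$. The threshold is $r\ge -n$, exactly as the paper records via Lemma~\ref{non-homo Ap}. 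This matters for the logic of your proof: with the incorrect criterion, the $A_\infty$ check would impose \emph{no} restriction on $\alpha$ in cases (ii) and (iv) (where $\sigma^{1-p^\prime}=\langle x\rangle^{-\alpha p^\prime}$) nor on $\beta$ in cases (iii) and (iv) (where $w=\langle x\rangle^{\beta q}$), contradicting your claim that "translating these criteria ... recovers exactly the $\alpha,\beta$ endpoint restrictions." The constraints $\alpha\le n(1-1/p)$ and $-n/q\le\beta$ in those cases come precisely from $-\alpha p^\prime\ge -n$ and $\beta q\ge -n$, which your stated criterion cannot produce. Once the criterion is corrected to $r\ge -n$, the remainder of your sketch (the scaling computations for $W$ and the two-sided constraints on $\gamma$ in the mixed cases) does track the paper's proof, though you leave the case-by-case bookkeeping unverified.
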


\bigskip

Our second purpose is to provide existence results for local and global solutions to the Cauchy problem for the Hardy-H\'enon parabolic equation in $\Rn,\ (n \ge 1)$;
\[
\tn{(HH)}
\begin{cases}
\; \partial_t u - \Delta u = V u^\tau,\ & (t>0,\ x \in \Rn) \\
\; u(0) = u_0,\ & (x \in \Rn)
\end{cases}
\]
with a general potential $V$ satisfying a certain Muckenhoupt type condition by applying Theorem \ref{main 1}.
Our results, Theorems \ref{local} and \ref{global}, remain valid if $Vu^\tau$ is replaced by $V|u|^{\tau-1}u$.
The stationary equation with $V(x)= |x|^\gamma$,
\[
(\tn{E})_\alpha \quad -\Delta u = |x|^\gamma u^\tau
\]
was proposed by H\'enon \cite{H} as a model to study rotating stellar systems in astrophysics.

The elliptic equation $(\tn{E})_0$ is known as the Lane-Emden equation.
The existence and non-existence of classical solutions for the equation $(\tn{E})_0$ are well-known.
Gidas and Spruck \cite{G-S} and Chen and Li \cite{C-L} showed that the equation $(\tn{E})_0$ has no classical solutions if $\tau \in (1,\tau_S(0))$, where
\[
\tau_S(\gamma):=
\begin{cases}
\; \dfrac{n + 2 + 2 \gamma}{n-2} \ & \tn{if} \ n >2 \\
\; \infty \ & \tn{if}\ n \le 2.
\end{cases}
\]
On the other hand, the equation $(\tn{E})_0$ allows classical solutions if $\tau \ge \tau_S(0)$, see Gidas and Spruck \cite{G-S} and Liu, Guo and Zhang \cite{L-G-Z}.
For the case $\gamma \not = 0$, Ni \cite{Ni1, Ni2} provided the existence results of classical solutions to $(\tn{E})_\gamma$ when $\tau \ge \tau_S(\gamma)$ and $\gamma > -2$.
The non-existence results for $(\tn{E})_\gamma$ in the case $\tau \in (1,\tau_S(\gamma))$ and $\gamma >-2$ were established by Reichel and Zou \cite{R-Z}.
It is known that the condition $\gamma >-2$ is necessary for the existence of solutions to $(\tn{E})_\gamma$, see Brezis and Cabr\'e \cite{Bre-C}, Mitidieri and Pohozaev \cite{M-P} and Dancer, Du and Guo \cite{D-D-G}.
Recently, Dai and Qin \cite{D-Q} and Giga and Ng$\hat{\tn{o}}$ \cite{G-N} treated the remaining cases, completing the picture on this issue.

While there are several works addressing the existence of solutions to (HH) with $V(x)=|x|^\gamma$ in the case $\gamma <0$, there appears to be limited literature considering the case $\gamma >0$.
For the former case, Ben Slimena, Tayachi and Weissler \cite{B-T-W} and Chikami, Ikeda and Taniguchi \cite{C-I-T1} provided global existence results under the assumption, $-\min (n,2) < \gamma < 0$.
For the latter case, Wang \cite{W} and Hirose \cite{Hirose} constructed global solutions for the case $\gamma >-2$.
The authors showed the existence of the self-similar solutions $u(t,x) = t^{-\beta} U(x/\sqrt{t})$ with some $\beta$, where $U$ is a solution to the corresponding elliptic equation
\[
\Delta U + \dfrac{1}{2} x \cdot \nabla U + \beta U + |x|^\gamma U^p=0.
\]
To the best of the author's knowledge, the only paper that treated a general potential $V$ is Ben Slimena, Tayachi and Weissler \cite[Section 7]{B-T-W}, where they studied the asymptotic behavior of solutions, assuming their existence.

Our results, Theorems \ref{local} and \ref{global}, can treat the case $\gamma =-2$; see also Remarks \ref{rem cor local} and \ref{rem cor global}.
Very recently, Bhimani, Haque and Ikeda \cite{B-H-I} succeeded in constructing local solutions to the linear heat equation with a inverse square potential
\[
\partial_t u - \Delta u + |x|^{-2} u =0
\]
by applying pointwise estimates of the kernel of the heat semigroup $e^{t(-\Delta + |x|^{-2})}$ due to Metafune, Negro and Spina \cite{M-N-S}.
There are many papers concerning the heat equation with a singular potential.
This issue can be traced back to Baras and Goldstein \cite{B-G}, in there the authors provided that the existence of solutions to the heat equation $\partial_t u - \Delta u = c|x|^{-2}u$ under an explicit condition: $0 \le c \le ((n-2)/2)^2$, and the non-existence of solutions when $c>((n-2)/2)^2$.

\medskip

We concern the existence of mild solutions to (HH); $u$ is a mild solution to (HH) if $u$ satisfies
\[
u(t) = e^{t\Delta}u_0 + \int_0^t e^{(t-s)\Delta} \left(Vu(s)^\tau \right) ds.
\]
Our local existence result does not need any auxiliary spaces.
\begin{thm} \label{local}
Let $1<p<\infty$ and $1 \le \tau < \min(p,1+2p/n)$.
We assume that $\sigma \in A_p$ and $\sigma V^{-p/\tau} \in A_{p/\tau}$.
Further, we assume that there is $n(\tau -1)/p \le \beta \le n(1-1/p)$ so that $\beta < 2$ and
\begin{equation} \label{cond local}
W:= \left[\sigma^{-\tau/(p-\tau)} V^{p/(p-\tau)},\ \sigma \right]_{A^\beta_{p/\tau,p}} < \infty.
\end{equation}
Then, for any $u_0 \in L^p(\sigma)$ there exist $T \in (0,\infty)$ and a unique local mild solution $u \in C([0,T); L^p(\sigma))$ to \tn{(HH)}.
\end{thm}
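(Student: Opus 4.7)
The plan is a standard Banach fixed-point argument in $X_T := C([0,T]; L^p(\sigma))$ for the Duhamel map
\[
\Phi(u)(t) := e^{t\Delta}u_0 + \int_0^t e^{(t-s)\Delta}\bigl(Vu(s)^\tau\bigr) ds,
\]
where the nonlinear term is controlled through Theorem \ref{main 1}(i). Since $\sigma \in A_p$, the heat semigroup is uniformly bounded and strongly continuous on $L^p(\sigma)$, so $\|e^{t\Delta}u_0\|_{L^p(\sigma)} \lesssim \|u_0\|_{L^p(\sigma)}$; the real work is estimating the integral term.

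The first step is the correct choice of source weight. Set $\tilde\sigma := \sigma V^{-p/\tau}$, which by hypothesis lies in $A_{p/\tau}$. Using $(p/\tau)' = p/(p-\tau)$, a direct computation gives
\[
\tilde\sigma^{1-(p/\tau)'} = \sigma^{-\tau/(p-\tau)} V^{p/(p-\tau)},
\]
so Theorem \ref{main 1}(i) applied with exponents $(p/\tau, p)$, source weight $\tilde\sigma$, target weight $\sigma$, and fractional index $\beta$ produces the two-weight constant $W$ exactly. Moreover
\[
\|Vu^\tau\|_{L^{p/\tau}(\tilde\sigma)} = \Bigl(\int V^{p/\tau}|u|^p \sigma V^{-p/\tau} dx\Bigr)^{\tau/p} = \|u\|_{L^p(\sigma)}^\tau,
\]
and $\tilde\sigma \in A_{p/\tau}$ together with Remark \ref{rem main 1}(3) supplies the decay condition $\lan |Vu^\tau| \ran_Q \to 0$ as $|Q| \to \infty$. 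The admissibility constraints of Theorem \ref{main 1}(i) are met because $n(\tau-1)/p = n(1/(p/\tau)-1/p) \le \beta < n$, while the condition $\tau < 1+2p/n$ is exactly what guarantees the existence of $\beta$ with $\beta < 2$ in the prescribed range. Consequently,
\[
\|e^{(t-s)\Delta}(Vu(s)^\tau)\|_{L^p(\sigma)} \lesssim W (t-s)^{-\beta/2} \|u(s)\|_{L^p(\sigma)}^\tau.
\]

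Integrating this over $[0,t]$ and using $\beta/2 < 1$ yields
\[
\|\Phi(u)\|_{X_T} \le C\|u_0\|_{L^p(\sigma)} + C W T^{1-\beta/2} \|u\|_{X_T}^\tau.
\]
For the contractivity, one applies $|u^\tau - v^\tau| \lesssim (|u|^{\tau-1}+|v|^{\tau-1})|u-v|$, Hölder with exponents $\tau/(\tau-1)$ and $\tau$ against the measure $\sigma dx$, and the identity $V^{p/\tau}\tilde\sigma = \sigma$ to obtain
\[
\|V(u^\tau - v^\tau)\|_{L^{p/\tau}(\tilde\sigma)} \lesssim (\|u\|_{L^p(\sigma)}+\|v\|_{L^p(\sigma)})^{\tau-1}\|u-v\|_{L^p(\sigma)},
\]
which, combined with the same smoothing estimate as above, gives
\[
\|\Phi(u)-\Phi(v)\|_{X_T} \lesssim W T^{1-\beta/2} (\|u\|_{X_T}+\|v\|_{X_T})^{\tau-1}\|u-v\|_{X_T}.
\]
Fixing $R := 2C\|u_0\|_{L^p(\sigma)}$ and choosing $T$ so small that $CWT^{1-\beta/2}R^{\tau-1} \le 1/2$ makes $\Phi$ a strict contraction on the closed ball $\{u \in X_T : \|u\|_{X_T} \le R\}$. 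Banach's theorem then delivers the unique fixed point $u$, and the $C([0,T]; L^p(\sigma))$-regularity follows from strong continuity of $e^{t\Delta}$ on $L^p(\sigma)$ together with uniform convergence of the Picard iterates.

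The principal obstacle is not analytic but organizational: aligning the weights so that the constant extracted from Theorem \ref{main 1}(i) is exactly $W$, and verifying the decay hypothesis $\lan |Vu^\tau| \ran_Q \to 0$ without assuming $Vu^\tau$ lies in any unweighted Lebesgue space. The role of $\tau < 1 + 2p/n$ and $\beta < 2$ is transparent in the argument: they are precisely what renders the time-integral of $(t-s)^{-\beta/2}$ convergent and shrinking in $T$, thereby enabling the contraction.
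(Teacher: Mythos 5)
Your argument follows the same route as the paper: apply Theorem~\ref{main 1}(i) with source exponent $r=p/\tau$, identify $\tilde\sigma^{1-r'}=\sigma^{-\tau/(p-\tau)}V^{p/(p-\tau)}$ to extract exactly the constant $W$, observe $\|Vu^\tau\|_{L^{p/\tau}(\tilde\sigma)}=\|u\|_{L^p(\sigma)}^\tau$, use $\tilde\sigma\in A_{p/\tau}$ together with Remark~\ref{rem main 1}(3) for the decay hypothesis, integrate the $(t-s)^{-\beta/2}$ kernel, and run the standard $|u^\tau-v^\tau|$ contraction. All the exponent bookkeeping checks out.

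The one place you cut a corner is the verification that $\Phi$ actually maps $C([0,T];L^p(\sigma))$ into itself, which Banach's fixed-point theorem requires before you can invoke it on a closed ball of that space. Your norm bound $\|\Phi(u)\|_{X_T}\lesssim\cdots$ controls the size of $\Phi(u)$ but does not by itself deliver continuity in $t$, and your closing remark that the regularity "follows from strong continuity of $e^{t\Delta}$ together with uniform convergence of the Picard iterates" begs the question: a uniform limit of continuous functions is continuous, but you must first show each iterate is continuous, which is precisely the same property of $\Phi$ you are trying to establish. The paper's Step~2 is the nontrivial part of the proof here: it verifies right and left continuity of $t\mapsto D[u](t)$ by splitting the Duhamel integral, applying the smoothing estimate from Theorem~\ref{main 1}, and invoking Lebesgue's dominated convergence theorem (using that $U(s)\in L^r(\sigma V^{-r})$), with the left-continuity requiring an extra decomposition and a change of variable. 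This step is routine but not automatic and should not be elided; you should either reproduce it or restructure the argument to run the fixed point in the space of bounded (not necessarily continuous) maps and then prove continuity of the fixed point afterwards --- but in either case the dominated-convergence argument is unavoidable.
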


\medskip

\begin{rem} \label{local rem}
\begin{enumerate}
\item
In the case $\sigma \equiv 1$, the condition \eqref{cond local} coincides with a Morrey norm;
\[
W = \|V\|_{\calM^{n/\ell}_{p/(p-\tau)}} \ \tn{with}\ \ell= \beta - \dfrac{n(\tau-1)}{p} \in \left[0, n\left(1 - \dfrac{\tau}{p} \right) \right].
\]
The assumptions on $\beta$ ensure that $\ell \ge 0$ and $p/(p-\tau) \le n/\ell$. 
For the latter condition, it is known that $\calM^p_q = \{0\}$, if $q \le p$ does not hold.
The potential $V(x) = |x|^{-\ell}$ is applicable to Theorem \ref{local}.

\item
If we replace the non-linear term by $V|u|^{\tau-1}u$, then Theorem \ref{local} remains valid.
\end{enumerate}
\end{rem}

\medskip

By using Corollary \ref{cor}, we consider the case that both $\sigma$ and $w$ are power weights $|x|^\alpha$ or non-homogeneous weights $\lan x \ran^\alpha$.
\begin{cor} \label{cor local}
Let $1<p<\infty,\ 1 \le \tau < \min(p,1+2p/n)$.
If one of the following holds, then the statement in Theorem \ref{local} holds.

\tn{(i)} $\sigma(x) = |x|^\alpha,\ V(x) = |x|^\gamma$ and
\[
\begin{cases}
\; \circ \ -n < \alpha < n(p-\tau), \\
\; \circ \ \dfrac{\alpha}{p}(\tau-1) - n \left(1- \dfrac{\tau}{p} \right) \le \gamma \le \dfrac{\alpha}{p}(\tau-1), \\
\; \circ \ \max \left(\dfrac{n+\alpha}{p}\tau - n, \dfrac{n+\alpha}{p}(\tau-1) -2 \right) < \gamma.
\end{cases}
\]

\medskip

\tn{(ii)} $\sigma(x) = \lan x \ran^\alpha,\ V(x) = |x|^\gamma$ and
\[
\begin{cases}
\circ \ 0 \le \alpha \le n(p-1), \\
\circ \ \dfrac{n}{p}\tau - n < \gamma \le 0, \ & \tn{if}\ \alpha=0 \\
\circ \ \dfrac{n+\alpha}{p}\tau - n \le \gamma \le 0, \ & \tn{if}\ \alpha >0 \\
\circ \ \dfrac{n}{p} (\tau-1) - 2 < \gamma.
\end{cases}
\]

\medskip

\tn{(iii)} $\sigma(x) = |x|^\alpha,\ V(x) = \lan x \ran^\gamma$ and
\[
\begin{cases}
\circ \ -n < \alpha <
\begin{cases}
\; n \left(\dfrac{p}{\tau}-1 \right) \ & \tn{if}\ \tau=1 \\
\; \min \left(n \left(\dfrac{p}{\tau} -1 \right), \dfrac{2p}{\tau-1} -n \right) \ & \tn{if}\ \tau>1
\end{cases}, \\
\circ \dfrac{n+\alpha}{p}\tau - n \le \gamma \le 0 \le \dfrac{\alpha}{p}(\tau-1) \ ,
\end{cases}
\]

\medskip

\tn{(iv)} $\sigma(x) = \lan x \ran^\alpha,\ V(x) = \lan x \ran^\gamma$ and
\[
\begin{cases}
\circ \ - n \le \alpha \le n(p-\tau), \\
\circ \ \max \left(\dfrac{n+\alpha}{p}\tau - n,\ \dfrac{\alpha}{p}(\tau-1)-n \left(1-\dfrac{\tau}{p} \right) \right) \le \gamma \le \min \left(\dfrac{n+\alpha}{p}\tau, \dfrac{\alpha}{p} (\tau-1) \right), \\
\; \circ \ \dfrac{n+\alpha}{p}(\tau-1) -2 < \gamma.
\end{cases}
\]
\end{cor}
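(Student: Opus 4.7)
The plan is to apply Theorem~\ref{local}, for which the only non-trivial hypothesis is the finiteness of $W$ in~\eqref{cond local}. Using the identity $\sigma^{-\tau/(p-\tau)}V^{p/(p-\tau)}=(\sigma V^{-p/\tau})^{1-(p/\tau)'}$, the constant $W$ is exactly the two-weight $A^{\beta}_{p/\tau,p}$-constant of the pair $(\sigma V^{-p/\tau},\sigma)$. For weights of pure power or bracket type, the finiteness of this constant is the content of Corollary~\ref{cor} applied with the substitution $(p,q,\gamma)\mapsto(p/\tau,p,\beta)$ and the weight pair $(\sigma V^{-p/\tau},\sigma)$ playing the role of $(\sigma,w)$ there.

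In case (i) both $\sigma V^{-p/\tau}=|x|^{\alpha-\gamma p/\tau}$ and $\sigma=|x|^{\alpha}$ are pure powers. Setting $a:=\alpha\tau/p-\gamma$ and $b:=\alpha/p$, Corollary~\ref{cor}(i) applies verbatim, forcing
\begin{equation*}
\beta \;=\; \frac{n(\tau-1)}{p}+(a-b)\;=\;\frac{(n+\alpha)(\tau-1)}{p}-\gamma.
\end{equation*}
The chain $-n/p<b\le a<n(1-\tau/p)$ together with the remaining hypotheses of Theorem~\ref{local} ($\sigma\in A_p$, $\sigma V^{-p/\tau}\in A_{p/\tau}$, and the strict bound $\beta<2$) then rewrites, by elementary algebra, as exactly the three displayed bullets: $\beta\ge n(\tau-1)/p$ gives $\gamma\le\alpha(\tau-1)/p$ and $\beta\le n(1-1/p)$ gives the lower bound in the second bullet; the $A_{p/\tau}$ condition yields $\gamma>(n+\alpha)\tau/p-n$ and $\beta<2$ yields $\gamma>(n+\alpha)(\tau-1)/p-2$, combining to the third bullet; and $\sigma\in A_p$ is automatic from the first bullet since $n(p-\tau)\le n(p-1)$. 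Case~(iv) is analogous, applying Corollary~\ref{cor}(iv) to the pair of bracket weights $\langle x\rangle^{\alpha-\gamma p/\tau}$ and $\langle x\rangle^{\alpha}$.

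Cases~(ii) and~(iii) are slightly more subtle, because $\sigma V^{-p/\tau}$ is a mixed weight of the form $\langle x\rangle^{\alpha}|x|^{-\gamma p/\tau}$ (resp.\ $|x|^{\alpha}\langle x\rangle^{-\gamma p/\tau}$). I intend to handle them by splitting the supremum defining $W$ into cubes with $|Q|\lesssim 1$ and cubes with $|Q|\gtrsim 1$: on the former the bracket factor is comparable to~$1$ and the matter reduces to a pure-power estimate, while on the latter $\langle x\rangle$ and $|x|$ are comparable on $Q$ and the relevant case of Corollary~\ref{cor}(ii) (resp.\ (iii)) governs the estimate. Cubes straddling the unit sphere are absorbed into the large-cube regime by enlarging the cube by a fixed factor. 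The stated bullets, in particular the signs $0\le\alpha$ in (ii) and $\gamma\le 0$ and $\alpha\ge 0$ in (iii), are exactly what is needed to glue the two regimes.

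The main obstacle is not conceptual but algebraic bookkeeping: one must track strict versus non-strict inequalities, handle the degeneration at $\tau=1$ (where the upper bound $\alpha<2p/(\tau-1)-n$ in (iii) drops away), and verify case-by-case that each displayed bullet is precisely the intersection of one chain inequality from the appropriate part of Corollary~\ref{cor} with one of the auxiliary constraints $\sigma\in A_p$, $\sigma V^{-p/\tau}\in A_{p/\tau}$, or $\beta<2$ of Theorem~\ref{local}.
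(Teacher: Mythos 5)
Your reduction of cases (i) and (iv) to Corollary~\ref{cor}, via the identity $\sigma^{-\tau/(p-\tau)}V^{p/(p-\tau)}=(\sigma V^{-p/\tau})^{1-(p/\tau)'}$ and the substitution $(p,q,\gamma)\mapsto(p/\tau,p,\beta)$, is correct and the algebra checks out; it is in substance the same calculation as the paper's, which instead plugs Lemma~\ref{est on ball} directly into the definition of $W$ rather than routing through Corollary~\ref{cor}. Both approaches choose $\beta=(n+\alpha)(\tau-1)/p-\gamma$ and collect exactly the same inequalities.

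The sketch you give for cases (ii) and (iii) has a genuine gap. The decomposition you propose is by cube \emph{size} only ($|Q|\lesssim1$ versus $|Q|\gtrsim1$), and you assert that on small cubes ``the bracket factor is comparable to~1.'' That is false: a small cube centered at $x_0$ with $|x_0|$ large has $\lan x\ran\approx\lan x_0\ran\gg1$ on the whole cube. The correct decomposition, used by the paper through Lemma~\ref{est on ball}, is by the \emph{position} of the cube relative to the origin (type~I: $|x_0|\ge3R$; type~II: $|x_0|<3R$) combined with the radius, giving four regimes; the bracket factor is a constant $\approx\lan x_0\ran$ on type~I balls (not $\approx1$) and only $\approx1$ on type~II balls with $R\le1$. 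What saves the estimate in the paper is not comparability to $1$ but the \emph{sign} conditions: in (ii) one has $\alpha\ge0$, so the exponent on $\lan x_0\ran$ appearing in $W$ is $\le0$ and $\lan x_0\ran^{-\alpha(\tau-1)/p}\le1$ is a one-sided bound, not a two-sided comparison; similarly in (iii) the constraint $\gamma\le0$ is what makes the corresponding bracket factor drop out on one side. Your plan to ``glue the two regimes'' by enlarging straddling cubes also needs the type~I/type~II distinction and the explicit estimates of Lemma~\ref{est on ball}(v)--(vi) for the mixed weight $|x|^a\lan x\ran^b$; without these, the claim that the small-cube regime ``reduces to a pure-power estimate'' does not follow. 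The architecture is salvageable, but as written the key step for (ii) and (iii) rests on an assertion that is not true, so the proposal does not yet constitute a proof of those two cases.
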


\medskip

\begin{rem} \label{rem cor local}
\begin{enumerate}
\item
As mentioned in Introduction, in the previous papers, it has been difficult to study the case $\gamma \le -2$.
However, our second case (ii) includes the case $\gamma = -2$, for example.
Indeed, if we assume that
\[
n \ge 3,\ n/(n-2) \le p \ \tn{and}\ 1 < \tau < \min \left(p,\ 1 + \dfrac{2p}{n},\ p \left(1 - \dfrac{n}{2} \right) \right)
\]
then $\gamma=-2$ is acceptable, while the first case (i) does not include such a case.

\item
If $\alpha > 0$ and $\tau >1$, then the cases (i) and (iv) can treat a positive $\gamma$.
\end{enumerate}
\end{rem}

\medskip

Our global result can read as follows.
\begin{thm} \label{global}
Let $\max(1,n/2) < p < \infty,\ 2 < \tau < 1+2p/n,\ \sigma \in A_p,$ and $\sigma V^{-p} \in A_p$.
We assume that there exist $p \le q,q_1<\infty$ and $n(1/p-1/q) \le \alpha \le \min(n,2/(\tau-1)-n/q)$ such that
\[
(\ast)
\begin{cases}
\; 2-n\left(1+\dfrac{1}{{q_1}} \right) < \left(\alpha + \dfrac{n}{q} \right)(\tau-2), \\
\; W_1:= \left[\sigma^{1-p^\prime} V^{p^\prime}, \sigma \right]_{A^\beta_{p,p}} < \infty, & \tn{with}\ \beta:= 2 - \left(\alpha + \dfrac{n}{q} \right)(\tau-1) \\
\; W_2 := \left[\sigma^{1-p^\prime},1 \right]_{A^\alpha_{p,q}} = \left\| \dfrac{1}{\sigma^{1/p}} \right\|_{\calM^{n/\ell}_{p^\prime}} < \infty, \ & \tn{with}\ \ell := \alpha - n \left(\dfrac{1}{p} - \dfrac{1}{q} \right) \\
\; W_3:= \left[\sigma^{1-p^\prime} V^{p^\prime}, 1 \right]_{A^{\beta_1}_{p,q_1}} = \left\| \dfrac{V}{\sigma^{1/p}} \right\|_{\calM^{n/{\ell_1}}_{p^\prime}} < \infty,\ & \tn{with}\ \ell_1 := 2 - \left(\alpha + \dfrac{n}{q} \right)(\tau-2) - \dfrac{n}{p},
\end{cases}
\]
where $\beta_1= \alpha +\beta+n(1/q-1/{q_1})$.
Then, for any small $u_0 \in L^p(\sigma)$ there is a global mild solution $u \in C([0,\infty); L^p(\sigma))$ to \tn{(HH)}, which satisfies
\[
\sup_{t>0} t^{(\alpha+n/q)/2} \|u(t)\|_{L^\infty} \lesssim \|u_0\|_{L^p(\sigma)}.
\]
\end{thm}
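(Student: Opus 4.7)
My plan is to build the global solution via a Banach fixed point argument on a small ball in
\[
X := \bigl\{ u \in C\bigl([0,\infty); L^p(\sigma)\bigr) : \|u\|_X < \infty \bigr\}, \qquad \|u\|_X := \sup_{t>0}\|u(t)\|_{L^p(\sigma)} + \sup_{t>0} t^{(\alpha+n/q)/2}\|u(t)\|_{L^\infty},
\]
for the Duhamel map $\Phi(u)(t) := e^{t\Delta}u_0 + \int_0^t e^{(t-s)\Delta}(Vu(s)^\tau)\,ds$. The two components of $\|\cdot\|_X$ are chosen precisely so that each piece of $\Phi(u)$ can be controlled by them, making the argument close on a scaling-critical space.

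For the linear contribution, boundedness on $L^p(\sigma)$ follows from Theorem \ref{main 1}(i) with $p=q$ and $\gamma=0$ (Remarks \ref{rem main 1}(4) and (3) supply the $A_\infty$-conditions and the decay of cube averages from $\sigma\in A_p$), and $\sup_t t^{(\alpha+n/q)/2}\|e^{t\Delta}u_0\|_{L^\infty}\lesssim W_2\|u_0\|_{L^p(\sigma)}$ is Theorem \ref{main 1}(ii) with $\gamma=\alpha$. For the nonlinear contribution, the crucial observation is the identity
\[
\|Vu^\tau\|_{L^p(\sigma V^{-p})} = \|u\|_{L^{p\tau}(\sigma)}^\tau \le \|u\|_{L^p(\sigma)}\,\|u\|_{L^\infty}^{\tau-1},
\]
which converts the source into an $L^p$ norm against the weight $\sigma V^{-p}$, whose dual lies in $A_\infty$ thanks to $\sigma V^{-p}\in A_p$. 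Theorem \ref{main 1}(i) with source weight $\sigma V^{-p}$, target weight $\sigma$, $p=q$, and $\gamma=\beta$ then yields
\[
\|e^{(t-s)\Delta}(Vu(s)^\tau)\|_{L^p(\sigma)} \lesssim W_1 (t-s)^{-\beta/2}\|u(s)\|_{L^p(\sigma)}\|u(s)\|_{L^\infty}^{\tau-1},
\]
and the resulting beta-function integral collapses to a multiple of $t^{0}$ exactly because $\beta = 2-(\alpha+n/q)(\tau-1)$ is defined so as to make the $t$-exponent vanish; integrability at $s=0$ is $(\alpha+n/q)(\tau-1)<2$ (from $\alpha\le 2/(\tau-1)-n/q$) and at $s=t$ is $\beta<2$. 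The $L^\infty$ component of $\Phi(u)$ is handled in the same fashion, with Theorem \ref{main 1}(ii) replacing (i); the triple $(q_1,\beta_1,\ell_1)$ is forced by demanding that the time integral reproduce the scaling $t^{-(\alpha+n/q)/2}$, and the relation $\beta_1=\alpha+\beta+n(1/q-1/q_1)$ together with the first clause of $(\ast)$ places $\beta_1$ in the admissible range $n(1/p-1/q_1)\le\beta_1<n$ of Theorem \ref{main 1}(ii) and keeps the beta integral convergent at $s=t$ (using $\tau>2$).

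The Lipschitz estimate uses the pointwise bound $|u^\tau-v^\tau|\lesssim(|u|+|v|)^{\tau-1}|u-v|$ and reproduces the same integrals to give $\|\Phi(u)-\Phi(v)\|_X \lesssim (\|u\|_X+\|v\|_X)^{\tau-1}\|u-v\|_X$, so for small $\|u_0\|_{L^p(\sigma)}$ the map $\Phi$ contracts a small ball of $X$ and Banach's theorem supplies the unique global mild solution with the advertised decay rate built into the $X$-norm. The continuity $u\in C([0,\infty);L^p(\sigma))$ will be obtained from the strong continuity of $e^{t\Delta}$ on $L^p(\sigma)$ (density of test functions combined with the uniform bound just cited) together with dominated convergence for the Duhamel integral. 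The main obstacle will be bookkeeping: matching all the $t$-exponents across the two norms, verifying each hypothesis of Theorem \ref{main 1} (the $A_\infty$-conditions, the admissible range of $\gamma$, and the decay condition $\langle |Vu(s)^\tau|\rangle_Q\to 0$ as $|Q|\to\infty$, which propagates from $\sigma V^{-p}\in A_p$ via Remark \ref{rem main 1}(3)), and ensuring that each clause of $(\ast)$ is used exactly where it is needed.
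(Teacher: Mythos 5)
Your proposal takes essentially the same route as the paper's proof: same scaling-critical product norm $\|u\|_{X_p}+\|u\|_{X_\infty}$, same Duhamel fixed-point scheme, same pivotal identity $\|Vu^\tau\|_{L^p(\sigma V^{-p})}=\|u\|_{L^{p\tau}(\sigma)}^\tau\le\|u\|_{L^p(\sigma)}\|u\|_{L^\infty}^{\tau-1}$, and the same calls to Theorem~\ref{main 1}(i) (with $\gamma=\beta$, weights $\sigma V^{-p}\to\sigma$) and~(ii) (with $\gamma=\alpha$ for the free evolution, $\gamma=\beta_1$ for Duhamel), with the exponent bookkeeping ($\beta_1=\alpha+\beta+n(1/q-1/q_1)$, the first clause of $(\ast)$ bounding $\beta_1<n$, and $\tau>2$ ensuring integrability at $s=t$) playing exactly the role you describe. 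The only point you gloss over relative to the paper is the time-continuity of the Duhamel term: the paper does a careful $\varepsilon$-split of the integral for left and right continuity, and it moreover builds the decay $t^{(\alpha+n/q)/2}\|u(t)\|_{L^\infty}\to 0$ as $t\searrow 0$ into the space $X$ so that the Step~1 bound vanishes as $T\searrow 0$, whereas you compress this to ``dominated convergence''; this is a presentation gap rather than a conceptual one.
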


\medskip

\begin{rem} \label{global rem}
\begin{enumerate}
\item
The first condition in $(\ast)$ holds whenever $n \ge 2$.

\item
In the condition on $W_3$, the exponent $q_1$ is apparent, because the Morrey norm does not include $q_1$.
Thus, combining with the first remark, the exponent $q_1$ is relevant only in the case $n=1$.

\item
The constant $W_1$ is also related to a Morrey type norm;
\[
W_1 \le [\sigma]_{A_p}^{1/p} \sup_Q |Q|^{\beta/n - (1/p-1/q)} \left(\dfrac{1}{\sigma^{1-p^\prime}(Q)} \int_Q V^{p^\prime} \sigma^{1-p^\prime} dx \right)^{1/{p^\prime}}.
\]
In the case $\sigma \equiv 1$, the equality holds.

\item
If $n \ge 2,\ \sigma \equiv 1,\ q=p$ and $\alpha=0$, it follows that
\[
W_1=W_3= \|V\|_{\calM^{n/\beta}_{p^\prime}} \ \tn{and}\ W_2=1.
\]
Therefore, the potential $V(x)=|x|^{-\beta} = |x|^{-(2-n(\tau-1)/p)} \in \calM^{n/\beta}_{p^\prime}$ is applicable to Theorem \ref{global}.
Here, $\beta \in (0,2)$.

\item
It is known that if $\alpha < n(1/p-1/q)$ and $[\sigma,w]_{A^\alpha_{p,q}} < \infty$, then one of $\sigma$ and $w$ is $0$.
See \cite{F-H} for this fact.


\item
Theorem \ref{global} remains valid if we replace the non-linear term with $V|u|^{\tau-1}u$.
\end{enumerate}
\end{rem}

\medskip

Corollary \ref{cor} provides sufficient conditions on $\sigma$ and $V$, when both are a power weight or a non-homogeneous weight.
\begin{cor} \label{cor global}
Let $\max(1,n/2) < p < \infty,\ 2 < \tau < 1+2p/n$.
If one of the following holds, then Theorem \ref{global} holds.

\tn{(i)} $\sigma(x) = |x|^\alpha,\ V(x) = |x|^\gamma$ with arbitrary $q=q_1 \in [p,\infty)$ and
\[
\begin{cases}
\; \circ \ 0 \le \alpha < \min \left(n(p-1),\ \dfrac{2p}{\tau-1} -n \right), \\
\; \circ \ - \dfrac{n-2}{\tau-2}p -n < \alpha, \\
\; \circ \ \gamma = \dfrac{n+\alpha}{p} (\tau-1) - 2 (< 0).
\end{cases}
\]

\medskip

\tn{(ii)} $\sigma(x) = \lan x \ran^\alpha,\ V(x) = |x|^\gamma$ with arbitrary $q = q_1 \in [p,\infty)$ and
\[
\begin{cases}
\; \circ \ 0 \le \alpha \le n(p-1) \\
\; \circ \ \max \left(\dfrac{n+\alpha}{p} - n,\ -n\left(1-\dfrac{1}{p} \right)(\tau-1)-2 \right) \le \gamma \le \dfrac{n+\alpha}{p}(\tau-1)-2,\\
\; \circ \ 1 + \dfrac{2}{n} \le \tau.
\end{cases}
\]

\medskip

\tn{(iii)} $\sigma(x) = |x|^\alpha,\ V(x) = \lan x \ran^\gamma$ with arbitrary $q = q_1 \in [p,\infty)$ and
\[
\begin{cases}
\; \circ \ n \ge 2,\\
\; \circ \ 0<\alpha < n(p-1), \\
\; \circ \ \alpha \le 2p-n(\tau-1), \\
\; \circ \ \dfrac{n+\alpha}{p} -n \le \gamma \le \dfrac{\alpha/p-2}{\tau-1} + \dfrac{n}{p} (\le 0),\\
\; \circ \ -n\left(1-\dfrac{1}{p} \right) < \gamma.
\end{cases}
\]

\medskip

\tn{(iv)} $\sigma(x) = \lan x \ran^\alpha,\ V(x) = \lan x \ran^\gamma$ with $q=q_1=p$ and
\[
\begin{cases}
\; \circ \ n =2,3 \ \tn{or}\ n \ge 4 \ \tn{with} \ \dfrac{n}{n-2} \le p, \\
\; \circ \ 0 \le \alpha \le \min \left(n(p-1), \dfrac{2p-n}{\tau-2}+p(n-2)-n,\ \dfrac{2p}{\tau-1} + p(n-2) \right), \\
\; \circ \ \alpha \le \dfrac{n(p+1)-2p}{3-\tau} \ \tn{if}\ 2<\tau<3, \\
\; \circ \ \dfrac{n+\alpha}{p} -n \le \gamma \le \min \left(\dfrac{n+\alpha}{p}(\tau-1)-2,\ \dfrac{2-\alpha/p}{\tau-2} -2,\ \dfrac{2}{\tau-1} + \dfrac{n}{p} -2,\ \dfrac{n+\alpha}{p}(\tau-2) + \dfrac{n}{p} - 2 \right), \\
\; \circ \ \gamma < \dfrac{n}{p} - \dfrac{2}{\tau-2} (<0).
\end{cases}
\]
\end{cor}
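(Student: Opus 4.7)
The plan is to apply Theorem \ref{global} by verifying, case by case, all of its hypotheses: $\sigma \in A_p$, $\sigma V^{-p} \in A_p$, the existence of admissible $(q, q_1, \alpha)$ in the specified ranges, the first condition in $(\ast)$, and the finiteness of the three two-weight brackets $W_1, W_2, W_3$. Since in each case (i)--(iv) the weights $\sigma$ and $V$ are either pure power weights $|x|^\bullet$ or non-homogeneous weights $\lan x \ran^\bullet$, the $A_p$ conditions reduce to explicit ranges on the exponents via the classical characterization $|x|^a \in A_p \iff -n < a < n(p-1)$, and each of $W_1, W_2, W_3$ becomes an $A^\bullet_{p,q}$-constant of the kind directly treated by Corollary \ref{cor}.

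First, I would dispose of case (i), in which $\sigma = |x|^\alpha$ and $V = |x|^\gamma$; here $\sigma^{1-p^\prime}V^{p^\prime} = |x|^{\alpha(1-p^\prime) + \gamma p^\prime}$ is itself a pure power weight, so the $A_p$ conditions reduce to elementary inequalities on $\alpha$ and on $\alpha - \gamma p$, and each $W_i$ is a bracket of two power weights covered by Corollary \ref{cor}\tn{(i)}. The identity $\gamma = (n+\alpha)(\tau-1)/p - 2$ then emerges from the scaling balance in Corollary \ref{cor}\tn{(i)} applied to $W_1$ combined with $\beta = 2 - (\alpha + n/q)(\tau-1)$. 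Cases (ii)--(iv) introduce non-homogeneous weights, so $\sigma^{1-p^\prime}V^{p^\prime}$ is no longer single-type. The key pointwise trick is $\lan x \ran^a \le 1$ whenever $a \le 0$: since $\alpha \ge 0$ in each of (ii)--(iv), one has $\alpha(1-p^\prime) \le 0$, hence $\lan x \ran^{\alpha(1-p^\prime)} \le 1$, so $\sigma^{1-p^\prime}V^{p^\prime}$ is dominated pointwise by a single weight ($V^{p^\prime}$, or a pure power, or a pure non-homogeneous weight, depending on the case). By the manifest monotonicity of the bracket in its first argument, this reduces $W_1, W_2, W_3$ to brackets handled respectively by Corollary \ref{cor}\tn{(i)--(iv)}. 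In case (iv) the simplifying choice $q = q_1 = p$ makes the scaling balance close exactly, and Corollary \ref{cor}\tn{(iv)} applies directly.

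The main obstacle is not conceptual but bookkeeping: translating the three bracket conditions, the first inequality of $(\ast)$, and the two $A_p$ conditions into simultaneous sharp bounds on $\gamma$ produces a large system of inequalities whose intersection must match the displayed lists. The side conditions on $\tau$ (such as $\tau \ge 1 + 2/n$ in (ii) and the case-split $2 < \tau < 3$ in (iv)) are precisely those needed to keep the first condition of $(\ast)$ compatible with the other constraints after the chosen $q, q_1$ are substituted, while the listed bounds on $\alpha$ combine the $A_p$ admissibility of both $\sigma$ and $\sigma V^{-p}$ with the permissible ranges dictated by Corollary \ref{cor}. The exponents $q, q_1$ are essentially free in (i)--(iii) by Remark \ref{global rem}\tn{(2)}, which is why those cases allow an arbitrary $q = q_1 \in [p,\infty)$, whereas in (iv) one must take $q = q_1 = p$ so that the Morrey-type bound from Corollary \ref{cor}\tn{(iv)} accommodates the resulting exponent $\beta_1 = \alpha + \beta$.
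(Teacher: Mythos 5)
The proposal has a genuine gap in its treatment of $W_1$ (and $W_3$) for cases (ii)--(iv). Your key idea is to bound the mixed weight $\sigma^{1-p'}V^{p'}$ pointwise from above by dropping the factor with a nonpositive exponent (e.g.\ $\lan x\ran^{\alpha(1-p')}\le 1$ since $\alpha\ge 0$), then invoke monotonicity of the bracket in its first argument and appeal to Corollary~\ref{cor}. This does not work, because the dropped factor supplies decay at infinity that cancels against the growth of $\lan\sigma\ran_Q^{1/p}$ on the outer factor, and without it the majorizing bracket is genuinely infinite in the regimes the corollary covers. Concretely, in case (ii) with $\sigma=\lan x\ran^\alpha$, $V=|x|^\gamma$, $q=p$, and the paper's choice of the Theorem~\ref{global} parameter giving bracket exponent $\theta=-\gamma$, a computation on a type~II ball with $R\ge 1$ gives
\[
|Q|^{\theta/n}\,\lan\lan\cdot\ran^{\alpha(1-p')}|\cdot|^{\gamma p'}\ran_Q^{1/p'}\lan\lan\cdot\ran^\alpha\ran_Q^{1/p}\;\approx\;R^{\theta+\gamma}=1,
\]
whereas after dropping $\lan\cdot\ran^{\alpha(1-p')}$ the same quantity becomes $\approx R^{\theta+\gamma+\alpha/p}=R^{\alpha/p}$, which tends to infinity for $\alpha>0$. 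So your majorant $[\,|x|^{\gamma p'},\lan x\ran^\alpha\,]_{A^\theta_{p,p}}$ is $+\infty$ exactly where the statement claims $W_1<\infty$, and the reduction fails. (One can see the same obstruction internally: when you try to match your majorant to Corollary~\ref{cor}(iii), the translation gives $\beta_{\mathrm{Cor}}=\alpha/p$, but (iii) requires $\beta_{\mathrm{Cor}}\le 0$, forcing $\alpha\le 0$.)

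There is also a misstatement in the trick itself for case (iii): there $\sigma=|x|^\alpha$, so $\sigma^{1-p'}=|x|^{\alpha(1-p')}$, not $\lan x\ran^{\alpha(1-p')}$, and $|x|^{\alpha(1-p')}$ is \emph{not} bounded by $1$ near the origin, so the claimed pointwise domination does not hold. More broadly, Corollary~\ref{cor} is formulated only for a \emph{single} weight type in each slot; the quantities $W_1,W_3$ in cases (ii)--(iv) intrinsically involve the mixed weight $|x|^a\lan x\ran^b$ with both $a\ne 0$ and $b\ne 0$, and the paper handles exactly this via Lemma~\ref{est on ball}(v)--(vi), which give tight two-sided control on $L^1$-averages of such mixed weights on type~I/type~II balls. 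That is the missing ingredient in your plan: instead of reducing to Corollary~\ref{cor}, the paper estimates each $W_i$ directly from Lemma~\ref{est on ball} applied to the mixed weight, preserving the cancellation between the $|x_0|$- and $\lan x_0\ran$-factors (on type~I balls) and between the small-$R$ and large-$R$ behaviors (on type~II balls). Your case (i) outline is essentially fine, since there the first argument of the bracket collapses to a single pure power and the scaling balance you describe is correct; it is the mixed-weight cases (ii)--(iv) where the pointwise-domination shortcut breaks down and the detailed Lemma~\ref{est on ball} bookkeeping is unavoidable.
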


\medskip

\begin{rem} \label{rem cor global}
\begin{enumerate}
\item
The second case (ii) only allows us to take a positive $\gamma$.
In fact, if we assume that
\[
\max \left(1, \dfrac{n}{2} \right) < p< \infty,\ 0 \le \alpha \le n(p-1),\ 1+\dfrac{2}{n} < \tau < 1 + \dfrac{2p}{n},
\]
then we can take a positive $\gamma$.

\item
The second case (ii) also includes the case $V(x)=|x|^{-2}$, if we assume that
\[
n \ge 3,\ \dfrac{n}{2} < p < \infty,\ \dfrac{n}{n-2} \le p,\ 2 < \tau < 1 + \dfrac{2p}{n} \ \tn{and}\ 0 \le \alpha \le p\left(n-2 \right)-n.
\]
\end{enumerate}
\end{rem}

\medskip

This paper is organized as follows.
In Section 2, we recall the definitions of Morrey spaces and weight classes, and also basic facts about dyadic analysis.
In Section 3, we provide a sparse domination of the heat flow, and then prove Theorem \ref{main 1}.
We prove the existence of local and global solutions in Section 4.
Finally, in Section 5, we give a sketch of proofs of Corollaries \ref{cor}, \ref{cor local} and \ref{cor global}.

\vspace{5mm}

\section{Preliminary} \label{preliminary}
Throughout this paper, we use the following notation.
By a \textit{cube}, we mean a cube $Q$ in $\Rn$ with sides parallel to the coordinate axes.
Let $\ell(Q)$ denote its side length, and $|Q|$ its Lebesgue measure.
We call a Lebesgue measurable function $w$ on $\Rn$ a \textit{weight} if $w$ is a non-negative and locally integrable function.
For a weight $w$, a cube $Q$ and $1 \le p < \infty$, we denote
\[
w(Q):= \int_Q w dx,\ \lan f \ran_Q := \dfrac{1}{|Q|} \int_Q f dx \ \tn{and}\ \|f\|_{L^p(\sigma)} := \|f \sigma^{1/p}\|_{L^p}.
\]
We denote $\chi_E$ by the characteristic function of a measurable subset $E \subset \Rn$.
The notation $A \lesssim B$ denotes $A \le c B$ for a positive constant $c$ independent of $A$ and $B$, and also $A \approx B$ means that $A \lesssim B$ and $A \gtrsim B$.
Let us denote $C^\infty_0$ as the space of smooth functions having compact support.

\medskip

In this section, we recall the definitions of Morrey spaces, weight classes and some notations of dyadic analysis including the fractional sparse operators.

\bigskip

\subsection{Morrey spaces}
Morrey space $\calM^p_q$ is applied as a class for weights in the present paper.

\begin{Def} \label{Morrey def}
Let $1 \le q \le p \le \infty$.
\[
\|f\|_{\calM^p_q} := \sup_Q |Q|^{1/p-1/q} \|f\|_{L^q(Q)}.
\]
\end{Def}

\medskip

\begin{rem} \label{Morrey rem}
\begin{enumerate}
\item
If $p<q$ holds, then $\calM^p_q = \{0\}$.

\item
\[
L^p \hookrightarrow L^{p,\infty} \hookrightarrow \calM^p_q,
\]
where $L^{p,\infty}$ is the weak $L^p$ space.
\end{enumerate}
\end{rem}

\medskip

Morrey spaces appear in Fefferman-Phong inequality \cite{F}; if $n \ge 3$ and $2<q \le n$, it holds that
\begin{equation} \label{Fefferman-Phong}
\|Vf\|_{L^2} \lesssim \|V\|_{\calM^n_q} \|\nabla f\|_{L^2}.
\end{equation}
These inequalities imply that the Schr\ho{o}dinger operator $-\Delta + V$ is positive definite and is therefore a self-adjoint operator on $L^2$, provided that $\|V\|_{\calM^n_q}$ is small.
The inequality \eqref{Fefferman-Phong} with $L^n$ in the place of $\calM^n_q$ follows from a combination of H\ho{o}lder's inequality and the Sobolev inequality.
One of the advantages of \eqref{Fefferman-Phong} is the fact that the Coulomb potential $|x|^{-1}$ belongs to $\calM^n_q$ but not to $L^n$.
The Fefferman-Phong inequality \eqref{Fefferman-Phong} can be regarded as a special case of two-weight inequalities for the fractional integral operator $I_1$.
Morrey spaces are implicitly present in our estimates for the heat flow; see Theorem \ref{main 1}

\bigskip

\subsection{Weight classes}
Muckenhoupt classes $A_p,\ (1 \le p \le \infty),$ are suitable for the $L^p$ boundedness of several operators, for example, Hardy-Littlewood maximal operator $M$ and Calder\'on-Zygmund operators.

\begin{Def} \label{A_p}
\tn{(i)} Let $1 \le p \le \infty$ and $w$ be a weight.
\[
[w]_{A_p} :=
\begin{cases}
\; \dis \sup_Q \lan w \ran_Q \|w^{-1}\|_{L^\infty(Q)} \ & \tn{if}\ p=1 \\
\; \dis \sup_Q \lan w \ran_Q \lan w^{1-p^\prime} \ran_Q^{p-1} \ & \tn{if}\ p \in (1,\infty),\\
\; \dis \sup_Q \lan w \ran_Q \exp \lan \log w^{-1} \ran_Q,\ & \tn{if}\ p=\infty.
\end{cases}
\]

\tn{(ii)} [Fujii \cite{Fujii}, Wilson \cite{Wilson}]
For a weight $w$,
\[
|w|_{A_\infty} := \sup_Q \dfrac{1}{w(Q)} \int_Q M(\chi_Q w) dx.
\]
\end{Def}

\medskip

\begin{rem} \label{A_p rem}
\begin{enumerate}
\item
If $1 \le p < \infty$ and $w \in A_p$, then $C^\infty_0$ is dense in $L^p(w)$.
See Nakai, Tomita and Yabuta \cite{N-T-Y} for the proof. 

\item
Hyt\ho{o}nen and P\'erez \cite{H-P} demonstrated that $[w]_{A_\infty} < \infty \iff |w|_{A_\infty} < \infty$, and the bound $|w|_{A_\infty} \lesssim [w]_{A_\infty}$.
It is well-known that the reverse inequality fails in general.

\item
We make use of the properties of $A_p$ weights; for $1<p<\infty$
\[
w \in A_p \iff w, w^{1-p^\prime} \in A_\infty,
\]
and for $f \in L^p(w)$ and $w \in A_p$ with $1<p<\infty$,
\[
\lan |f| \ran_Q \le [w]_{A_p}^{1/p} \dfrac{1}{w(Q)^{1/p}} \|f\|_{L^p(w)} \to 0 \ \tn{as}\ |Q| \to \infty
\]
in Section \ref{equation section}.
This convergence follows from the fact that $w \not \in L^1$.
\end{enumerate}
\end{rem}

\medskip

The $A_{p,q}^\alpha$ classes appear as a condition for $L^p-L^q$ bounds for the fractional operators.

\begin{Def} \label{two weight class}
Let $1 \le p,q \le \infty,\ \alpha \in \R$ satisfy $n(1/p-1/q) \le \alpha$, and $\sigma$ and $w$ are weights.
We say $(\sigma,w) \in A^\alpha_{p,q}$ if
\[
[\sigma, w]_{A^\alpha_{p,q}} := \sup_Q |Q|^{\alpha/n - (1/p-1/q)} \lan \sigma \ran_Q^{1/{p^\prime}} \lan w \ran_Q^{1/q} < \infty.
\]
\end{Def}

\medskip

It is known that the $A_{p,q}^\alpha$-condition characterizes the $L^p(\sigma)-L^{q,\infty}(w)$ bound for the fractional maximal operator $M^\alpha$;
\[
M^\alpha f := \sup_Q |Q|^{\alpha/n} \lan |f| \ran_Q \chi_Q.
\]
See the lecture note by Cruz-Uribe \cite[p.60]{CU}.
However, the $A_{p,q}^\alpha$-condition is not sufficient for the $L^p(\sigma)-L^q(w)$ bound for $M^\alpha$.
Counterexamples were given by Muckenhoupt and Wheeden \cite{M-W} for $\alpha=0$, and Cruz-Uribe \cite{CU} for $0<\alpha < n$.
The $A_{p,q}^\alpha$-condition is also not sufficient for the $L^p(\sigma)-L^{q,\infty}(w)$ bound for $I_\alpha$.
See Cruz-Uribe \cite[p.63]{CU} for the counterexample.
However, Fackler and Hyt\ho{o}nen \cite{F-H} showed that the $A_{p,q}^\alpha$-condition and the $A_\infty$-conditions for both $\sigma$ and $w$ ensure the $L^p(\sigma)-L^q(w)$ bound for $I_\alpha$.

\medskip

\begin{rem} \label{two weight class rem}
\begin{enumerate}
\item
Fackler and Hyt\ho{o}nen \cite{F-H} demonstrated that if $\alpha < n(1/p-1/q)$ and $[\sigma,w]_{A^\alpha_{p,q}} < \infty$, then $\sigma \equiv 0$ or $w \equiv 0$.
Morrey spaces have a similar property, see Remark \ref{Morrey rem}.

\item
A relationship between $A_{p,q}^\alpha$-classes and Morrey spaces is as follows: in the case $w \equiv 1$, one sees
\[
[\sigma,1]_{A^\alpha_{p,q}} = \left\|\sigma^{1/{p^\prime}} \right\|_{\calM^{n/\ell}_{p^\prime}} \ \tn{with}\ \ell=\alpha - n/{q^\prime}.
\]
Such quantities appear in Theorems \ref{local} and \ref{global}.

\item
Although it holds $[w]_{A_p} \ge 1$, the constant $[\sigma,w]_{A_{p,q}^\alpha}$ can be small.

\item
Let $1<p,q<\infty,\ \alpha, \beta,\gamma \in \R,\ \sigma(x) := \lan x \ran^{\alpha p}$ and $w(x) := \lan x \ran^{\beta q}$.
If
\[
n\left(\dfrac{1}{p} - \dfrac{1}{q} \right) \le \gamma \le n \left(\dfrac{1}{p} - \dfrac{1}{q} \right) + (\alpha - \beta) + \min \left(0,n \left(1-\dfrac{1}{p} \right) -\alpha \right) + \min \left(0,\dfrac{n}{q} + \beta \right),
\]
then we can see that $(\sigma^{1-p^\prime},w) \in A_{p,q}^\gamma$.
Both minimums equal $0$ if $\sigma^{1-p^\prime}, w \in A_\infty$.
This condition appears in Theorems \ref{local} and \ref{global}.
\end{enumerate}
\end{rem}

\bigskip

\subsection{Dyadic analysis}
We recall some notions on dyadic analysis, and results on the mapping properties of the fractional sparse operator $\Lambda_{\Sp}^\gamma$.
The reader can refer to Lerner and Nazarov \cite{L-N} for the former.
Since the latter is our main tool for the proof of Theorem \ref{main 1}, we present the result as Theorem A below.
To state the theorem, let us define basic notation for the sparse operator $\Lambda_{\Sp}^\gamma$.

\begin{Def} \label{sparse def}
\tn{(i)} A collection $D$ of cubes in $\Rn$ is called a dyadic grid if $D$ satisfies the following conditions
\[
\begin{cases}
\; \circ \ \tn{If}\ Q \in D,\ \tn{then the side length}\ \ell(Q) = 2^k \ \tn{for some}\ k \in \Z, \\
\; \circ \ \tn{If}\ Q_1, Q_2 \in D,\ \tn{then}\ Q_1 \cap Q_2 \in \{\emptyset,\ Q_1, Q_2\}, \\
\; \circ \ \tn{For every}\ k \in \Z,\ \tn{it holds that}\ \cup_{Q \in D_k} Q = \Rn \ \tn{where}\ D_k:= \{Q \in D;\ \ell(Q)=2^k\}. 
\end{cases}
\]

\medskip

\tn{(ii)} For any $a \in \{0,1,2\}^n$, the shifted dyadic cubes $\mathcal{D}^a$ are defined as follows;
\[
\mathcal{D}^a := \left\{2^{-j} \left([0,1)^n + m + (-1)^j\dfrac{a}{3} \right); j \in \Z,\ m \in \Z^n \right\}.
\]

\medskip

\tn{(iii)} Let $\eta \in (0,1)$.
A family $\Sp$ of cubes in $\Rn$ is called $\eta$-sparse family if there are disjoint subsets $\{E_Q\}_{Q \in \Sp}$ satisfying $E_Q \subset Q$ and $\eta |Q| \le |E_Q|$.

\medskip

\tn{(iv)} [Fackler and Hyt\ho{o}nen \cite{F-H}] For an $\eta$-sparse family $\Sp,\ \gamma \in [0,n)$ and a locally integral function $f$, we define the fractional sparse operator by
\[
\Lambda_{\Sp}^\gamma f := \sum_{Q \in \Sp} |Q|^{\gamma/n} \lan f \ran_Q \chi_Q.
\]
\end{Def}

\medskip

\begin{rem} \label{sparse def rem}
\begin{enumerate}
\item
Since the index $\eta$ in the definition above is not important in the present paper, we omit it.

\item
The dyadic grid $\mathcal{D}^a$ is not a sparse family.
\end{enumerate}
\end{rem}

\medskip

One of characteristic properties of $\{\mathcal{D}^a\}_{a \in \{0,1,2\}^n}$ is that \textit{every cubes} are dominated from above by cubes in them in the following sense.
\begin{proA} [Hyt\ho{o}nen, Lacey and P\'erez \cite{H-L-P}]
For any cube $Q$ in $\Rn$, there exist $a \in \{0,1,2\}^n$ and a cube $\wti{Q} \in \mathcal{D}^a$ such that $Q \subset \wti{Q}$ and $|Q| \approx \left|\wti{Q} \right|$.
\end{proA}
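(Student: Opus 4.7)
The plan is to reduce to the one-dimensional situation and then build $\wti{Q}$ coordinate by coordinate. Fix a cube $Q$ with side length $r := \ell(Q)$, and let $I_i$ denote its projection onto the $i$-th coordinate axis. I would choose $k \in \Z$ to be the smallest integer with $L := 2^k \ge 3r/2$, which yields $r < L \le 3r$, and then seek a $\wti{Q}$ of side length $L$.

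For each coordinate $i$, I would seek $a_i \in \{0,1,2\}$ and $m_i \in \Z$ such that the half-open interval of length $L$ with left endpoint $L m_i + L(-1)^{-k} a_i/3$ contains $I_i =: [x_i, x_i + r)$. This amounts to placing that left endpoint in the target interval $[x_i - L + r,\, x_i]$, whose length is $L - r \ge L/3$. As $(m_i, a_i)$ runs through $\Z \times \{0,1,2\}$, the candidate left endpoints fill out a coset of $(L/3)\Z$, so a pigeonhole argument guarantees that an admissible pair exists.

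Assembling these coordinate choices into $a := (a_1,\dots,a_n) \in \{0,1,2\}^n$ and setting $j := -k$, I would take
\[
\wti{Q} := 2^{-j}\Bigl([0,1)^n + (m_1,\dots,m_n) + (-1)^j \tfrac{a}{3}\Bigr),
\]
which lies in $\mathcal{D}^a$ by definition, contains $Q$, and satisfies $|\wti{Q}| = L^n \le 3^n |Q|$, yielding $|Q| \approx |\wti{Q}|$ as required.

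The only mildly delicate point, and what I would treat as the main obstacle, is the interplay of the parity-dependent sign $(-1)^j$ with the shifts $a_i/3$ in the definition of $\mathcal{D}^a$. This turns out to be inessential: the two triples $\{0, 1/3, 2/3\}$ and $\{0, -1/3, -2/3\}$ coincide modulo $\Z$, so at every scale $L$ the union of the three shifted lattices (in one dimension) produces left endpoints spaced exactly $L/3$ apart regardless of the parity of $j$, which is what the pigeonhole step needs.
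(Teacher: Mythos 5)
Your proof is correct, and it is the standard ``one-third trick'' argument for the shifted dyadic grids. The paper does not reproduce a proof of Proposition A---it only cites Hyt\ho{o}nen, Lacey and P\'erez \cite{H-L-P}---so there is no in-text argument to compare against, but your construction is exactly the canonical one in the literature: choose the scale $L = 2^k$ with $3r/2 \le L < 3r$ so that the admissible left endpoints in each coordinate form a lattice of spacing $L/3$, while the window $[x_i - L + r, x_i]$ in which the left endpoint must land has length $L - r \ge L/3$, forcing a hit by pigeonhole; then assemble the coordinates and set $j=-k$. Your remark that $\{0,1/3,2/3\}$ and $\{0,-1/3,-2/3\}$ coincide modulo $\Z$, so the parity-dependent sign $(-1)^j$ in the definition of $\mathcal{D}^a$ is harmless, is precisely the observation needed to make the lattice description uniform over scales. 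One tiny imprecision: since $0$ is among $\{0,\pm 1/3,\pm 2/3\}$, the candidate left endpoints form $(L/3)\Z$ itself rather than a proper coset, but this does not affect the pigeonhole step. The resulting bound $|Q| \le |\wti{Q}| < 3^n |Q|$ gives $|Q|\approx|\wti{Q}|$ as required.
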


\medskip

Two-weight inequality for $\Lambda_{\Sp}^\alpha$ was proved by Fackler and Hyt\ho{o}nen \cite{F-H}.
This bound plays an important role in the proof of our main result Theorem \ref{main 1}.
\begin{thmA} [Fackler and Hyt\ho{o}nen \cite{F-H}]
Let $1< p \le q < \infty,\ 0 \le \alpha < n,\ \sigma,w \in A_\infty$ and $(\sigma,w) \in A_{p,q}^\alpha$.
Assume that $\Sp$ is a sparse family in a dyadic grid $\mathcal{D}$.
Then, we have that
\[
1 \le \dfrac{\left\| \Lambda_{\Sp}^\alpha (\cdot \sigma) \right\|_{L^p(\sigma) \to L^q(w)}}{[\sigma,w]_{A_{p,q}^\alpha}} \le C,
\]
where the constant $C$ depends on $n,p,q,\alpha$ and the $A_\infty$-constants of $\sigma$ and $w$.
\end{thmA}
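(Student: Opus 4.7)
The plan is to derive the left-hand inequality by testing the operator on indicator functions of cubes in the sparse family. For $R\in\Sp$, I have $\Lambda^\alpha_\Sp(\chi_R\sigma)\ge|R|^{\alpha/n}\lan\sigma\ran_R\chi_R$, so that
\[
\frac{\|\Lambda^\alpha_\Sp(\chi_R\sigma)\|_{L^q(w)}}{\|\chi_R\|_{L^p(\sigma)}} \ge |R|^{\alpha/n-(1/p-1/q)}\lan\sigma\ran_R^{1/p^\prime}\lan w\ran_R^{1/q}.
\]
Taking the supremum over $R\in\Sp$ (and appealing to the three-lattice trick, Proposition A, to promote the sparse-supremum to the full-cube supremum when needed) recovers $[\sigma,w]_{A^\alpha_{p,q}}$.

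\textbf{Upper bound setup.} For the right-hand inequality, I would dualize against $L^{q^\prime}(w)$ to reduce the operator bound to the bilinear estimate
\[
\sum_{Q\in\Sp}|Q|^{\alpha/n+1}\lan f\sigma\ran_Q\lan gw\ran_Q\lesssim[\sigma,w]_{A^\alpha_{p,q}}\|f\|_{L^p(\sigma)}\|g\|_{L^{q^\prime}(w)}
\]
for nonnegative $f,g$. A short calculation from the $A^\alpha_{p,q}$-condition shows
\[
|Q|^{\alpha/n+1}\lan\sigma\ran_Q\lan w\ran_Q\le [\sigma,w]_{A^\alpha_{p,q}}\,\sigma(Q)^{1/p}w(Q)^{1/q^\prime},
\]
so that after factoring this out the problem reduces to
\[
\sum_{Q\in\Sp}\frac{\lan f\sigma\ran_Q}{\lan\sigma\ran_Q}\frac{\lan gw\ran_Q}{\lan w\ran_Q}\sigma(Q)^{1/p}w(Q)^{1/q^\prime}\lesssim\|f\|_{L^p(\sigma)}\|g\|_{L^{q^\prime}(w)}.
\]
Next, I would run two parallel principal-cube decompositions: let $\mathcal{F}\subset\Sp$ be the stopping cubes at which $\lan f\sigma\ran/\lan\sigma\ran$ doubles (a martingale on $(\Rn,\sigma\,dx)$), and analogously $\mathcal{G}\subset\Sp$ for $g$ on $(\Rn,w\,dx)$. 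Grouping $Q\in\Sp$ by its minimal principal ancestors $(P(Q),R(Q))\in\mathcal{F}\times\mathcal{G}$, the two ratios are essentially constant on each group and can be pulled out of the inner sum.

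\textbf{Closing step and main obstacle.} The remaining inner sum $\sum_{Q:P(Q)=P,R(Q)=R}\sigma(Q)^{1/p}w(Q)^{1/q^\prime}$ is controlled by invoking the $A_\infty$-hypotheses on both weights: these give $\sigma(Q)\approx\sigma(E_Q)$ and $w(Q)\approx w(E_Q)$ on the pairwise disjoint sparse sets $E_Q$, converting the discrete sum into integrals against $\sigma$ and $w$ over $P\cap R$, and then Hölder's inequality (valid because $p\le q$ forces $1/p+1/q^\prime\le 1$) dominates the result by a multiple of $\sigma(P\cap R)^{1/p}w(P\cap R)^{1/q^\prime}$. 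The outer sum over $(P,R)\in\mathcal{F}\times\mathcal{G}$ is then handled by two successive weighted Carleson embedding theorems (against $\sigma$ on $\mathcal{F}$ and against $w$ on $\mathcal{G}$, using that the stopping amplitudes form Carleson sequences for their respective measures), producing the target $\|f\|_{L^p(\sigma)}\|g\|_{L^{q^\prime}(w)}$. The main obstacle is the bilinear bookkeeping on the overlaps $P\cap R$: the two stopping families are built from different martingales relative to different measures, so their intersections inherit no natural dyadic structure, and one must exploit the sparseness of $\Sp$ together with the quantitative $A_\infty$-comparability $\sigma(Q)\approx\sigma(E_Q)$, $w(Q)\approx w(E_Q)$ carefully to close the argument while keeping the constants dependent only on $n,p,q,\alpha,|\sigma|_{A_\infty},|w|_{A_\infty}$ as claimed.
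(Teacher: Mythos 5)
This result is quoted from Fackler--Hyt\ho{o}nen \cite{F-H} and is not proved in the paper, so there is no in-paper argument to compare your sketch against; what follows is an assessment of the sketch on its own merits. The duality reduction, the identity $|Q|^{\alpha/n+1}\lan\sigma\ran_Q\lan w\ran_Q\le[\sigma,w]_{A^\alpha_{p,q}}\,\sigma(Q)^{1/p}w(Q)^{1/q'}$, and the use of $A_\infty$ to replace $\sigma(Q),w(Q)$ by $\sigma(E_Q),w(E_Q)$ on the disjoint sparse sets are all correct and are exactly the right uses of the hypotheses. But two real problems remain. For the lower bound, testing $\chi_R$ for $R\in\Sp$ yields $\sup_{R\in\Sp}|R|^{\alpha/n-(1/p-1/q)}\lan\sigma\ran_R^{1/p'}\lan w\ran_R^{1/q}$, which is \emph{at most} $[\sigma,w]_{A^\alpha_{p,q}}$, not at least; the three-lattice trick produces a comparable shifted dyadic cube, but that cube need not lie in the fixed family $\Sp$, so it cannot promote the sparse-restricted supremum to the full one. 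The left inequality in \cite{F-H} is really a statement after optimizing over sparse families in the grid.

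For the upper bound, note first that $p\le q$ gives $1/p+1/q'=1+(1/p-1/q)\ge 1$, \emph{not} $\le 1$ as you wrote, so H\"older does not apply in the form you describe; one must first factor $\sigma(E_Q)^{1/p}=\sigma(E_Q)^{1/p-1/q}\sigma(E_Q)^{1/q}$, pull out $\sigma(P\cap R)^{1/p-1/q}$, and only then apply H\"older with the conjugate pair $(q,q')$. More importantly, the step you yourself flag as ``the main obstacle'' --- controlling the double sum over stopping pairs $(P,R)\in\mathcal{F}\times\mathcal{G}$ built from two different martingales --- is the heart of the theorem and is left unresolved, so the sketch does not yet amount to a proof. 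For comparison, \cite{F-H} do not run a double corona at all: they appeal to the two-weight characterization of $L^p(\sigma)\to L^q(w)$ boundedness of positive dyadic operators by Sawyer-type testing conditions (cf.\ \cite{L-S-UT}, \cite{H-H-L}), and the content of their theorem is that the $A^\alpha_{p,q}$-constant together with the two $A_\infty$-hypotheses controls the localized testing constants $\sup_R\sigma(R)^{-1/p}\|\chi_R\Lambda^\alpha_\Sp(\chi_R\sigma)\|_{L^q(w)}$ and the dual one. Verifying those two localized estimates is a cleaner route to a complete argument than the bilinear bookkeeping you set up.
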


\vspace{5mm}

\section{Proof of two weight inequality for the heat flow} \label{heat section}
In this section, we prove Theorem \ref{main 1}.
One of the keys for the proof is the pointwise bound of the heat flow by the fractional sparse operator;
\begin{equation} \label{heat pointwise}
\left| e^{t\Delta}f(x) \right| \lesssim t^{-\gamma} \sum_{a \in \{0,1,2\}^n} \Lambda_{\Sp^a}^\gamma f(x)
\end{equation}
where $0 \le \gamma < n$, and $\{\Sp^a\}_a$ are sparse families that depend on $f$, but not on $t$.
It suffices to prove this theorem for non-negative functions $f$.
Therefore, in this section, we assume that $f$ is a non-negative function.

\bigskip

\subsection{The pointwise bound of the heat flow by the fractional sparse operators}
We shall verify the pointwise estimate \eqref{heat pointwise}.
Let $f \in L^p(\sigma)$ satisfy $\lan f \ran_Q \to 0$ as $|Q| \to \infty$.
The first half of the proof of \eqref{heat pointwise} is very elementary, while the latter half follows a well-known argument.

For a sufficiently large $N \in \N$, we have that
\begin{align*}
|e^{t\Delta}f(x)| & \lesssim t^{-n/2} \dis \sum_{k \in \Z} \left(\dfrac{1}{1+2^k}\right)^N \int_{|x_j-y_j| \le 2^k \sqrt{t}} |f| dy \\
& \le t^{-n/2} \dis \sum_{k \in \Z} \left(\dfrac{1}{1+2^k}\right)^N \sum_{a \in \{0,1,2\}^n} \sum_{\substack{Q \in \mathcal{D}^a \\ \ell(Q) \approx 2^k \sqrt{t}}} \int_Q |f| dy \chi_Q(x) \\
& \lesssim t^{-\gamma/2} \sum_{a \in \{0,1,2\}^n} \sum_{Q \in \mathcal{D}^a} |Q|^{\gamma/n} \lan f \ran_Q \chi_Q(x) \left(\sum_{k \in \Z} \left(\dfrac{1}{1+2^k} \right)^N 2^{(n-\gamma)k} \right) \\
& \lesssim t^{-\gamma/2} \sum_{a \in \{0,1,2\}^n} \Lam^\gamma_{\mathcal{D}^a}f(x) \\
& \lesssim t^{-\gamma/2} I_\gamma f(x).
\end{align*}
Here, we have used Proposition A in the second inequality, and Proposition 3.3 from \cite{CU} in the last inequality.
Because $\mathcal{D}^a$ is not a sparse family, we have to replace $\{\mathcal{D}^a\}_a$ with some sparse families.
This argument is well-known, but for the sake of completeness, we present it here.
Let us take $\lambda > 2^n$, and for any $a \in \{0,1,2\}^n$ and $k \in \Z$.
Let us denote maximal cubes $\{Q^a_{j,k}\}_{j=1}^\infty \subset \mathcal{D}^a$ satisfying $\lambda^k < \lan f \ran_{Q^a_{j,k}}$.\footnote{Here, we used the decay condition on $f$.}
From the maximality, it holds
\[
\lambda^k < \lan f \ran_{Q^a_{j,k}} \le 2^n \lambda^k
\]
for all $a, j$ and $k$.
Next, let us denote
\[
C_k^a := \{Q \in \mathcal{D}^a; \lambda^k < \lan f \ran_Q \le \lambda^{k+1}\}.
\]
One can observe the following two properties;
\[
\begin{cases}
\; \tn{if}\ Q \in \mathcal{D}^a \ \tn{and}\ \lan f \ran_Q \not = 0,\ \tn{then}\ \tn{there is a unique}\ k \in \Z \ \tn{such that}\ Q \in C_k^a \\
\; \tn{if}\ Q \in C_k^a,\ \tn{then}\ \tn{there is a unique}\ j \in \N\ \tn{such that}\ Q \subset Q^a_{j,k} \ \tn{and}\ \lan f \ran_Q \le \lambda \lan f \ran_{Q^a_{j,k}}.
\end{cases}
\]
By applying these properties, we can see that
\begin{align*}
\Lam^\gamma_{\mathcal{D}^a}f & = \dis \sum_{\substack{Q \in \mathcal{D}^a \\ \lan f \ran_Q \not = 0}} |Q|^{\gamma/n} \lan f \ran_Q \chi_Q \\
& \le \sum_{k \in \Z} \sum_{Q \in C^a_k} |Q|^{\gamma/n} \lan f \ran_Q \chi_Q \\
& \le \sum_{k \in \Z} \sum_{j=1}^\infty \sum_{\substack{Q \in C^a_k \\ Q \subset Q_{j,k}^a}} |Q|^{\gamma/n} \lan f \ran_Q \chi_Q \\
& \le \lambda \sum_{k \in \Z} \sum_{j=1}^\infty \lan f \ran_{Q_{j,k}^a} \sum_{\substack{Q \in C^a_k \\ Q \subset Q^a_{j,k}}} |Q|^{\gamma/n} \chi_Q.
\end{align*}
Here, if $\ell(Q^a_{j,k}) \approx 2^{-\ell}$ with some $\ell \in \Z$, then it holds
\[
\dis \sum_{\substack{Q \in C^a_k \\ Q \subset Q^a_{j,k}}} |Q|^{\gamma/n} \chi_Q \lesssim \sum_{m=\ell}^\infty 2^{-m\gamma} \chi_{Q^a_{j,k}} \lesssim \left|Q^a_{j,k} \right|^{\gamma/n} \chi_{Q^a_{j,k}}.
\]
Taking into account, one obtains
\[
\Lam^\gamma_{\mathcal{D}^a}f \lesssim \dis \sum_{j,k} \left|Q^a_{j,k} \right|^{\gamma/n} \lan f \ran_{Q^a_{j,k}} \chi_{Q^a_{j,k}} = \Lam^\gamma_{\Sp^a}f
\]
with $\Sp^a := \left\{Q^a_{j,k} \right\}_{j,k}$.
Because we can see that
\[
E_{Q^a_{j,k}} := \left\{x \in Q^a_{j,k} : \lambda^k < M_{\mathcal{D}^a} f(x) \le \lambda^{k+1} \right\},\ \text{with}\ M_{\mathcal{D}^a} f := \dis \sup_{Q \in \mathcal{D}^a} \lan f \ran_Q \chi_Q,
\]
enjoys the property that $|E_{Q^a_{j,k}} | \ge \eta |Q^a_{j,k}|$ if $a \gg 1$ and the sets $\{E^a_{j,k} \}_{j,k}$ are pairwise disjoint for each $a \in \{0,1,2\}^n$, the collections $\{\Sp^a \}_{a \in \{0,1,2\}^n}$ are $\eta$-sparse families.
Thus, we have achieved the desired sparse domination
\[
|e^{t\Delta}f| \lesssim t^{-\gamma/2} \dis \sum_{a \in \{0,1,2\}^n} \Lam^\gamma_{\Sp^a}f.
\]
\begin{flushright}
    $\square$
\end{flushright}

\bigskip

\subsection{Proof of Theorem \ref{main 1}}
We are now in a position to prove Theorem \ref{main 1}.

\medskip

(i) From the pointwise bound \eqref{heat pointwise}, it suffices to prove that for a sparse family $\Sp$,
\[
\left\| \Lambda_{\Sp}^\gamma f \right\|_{L^q(w)} \lesssim \left[\sigma^{1-p^\prime}, w \right]_{A^\gamma_{p,q}} \|f\|_{L^p(\sigma)}.
\]
This is immediately shown with the aid of Theorem A; for $g:= f \sigma^{-(1-p^\prime)}$,
\[
\left\| \Lambda_{\Sp}^\gamma f \right\|_{L^q(w)} = \left\| \Lambda_{\Sp}^\gamma (g \sigma^{1-p\prime}) \right\|_{L^q(w)} \lesssim \left[\sigma^{1-p^\prime}, w \right]_{A^\gamma_{p,q}} \left\|g\right\|_{L^p(\sigma^{1-p^\prime})} = \left[\sigma^{1-p^\prime}, w \right]_{A^\gamma_{p,q}} \|f\|_{L^p(\sigma)}.
\]

\medskip

(ii) From the smoothing $L^q-L^\infty$ estimate for $e^{t\Delta/2}$ and (i), one obtains that
\[
\left\| e^{t\Delta}f \right\|_{L^\infty} \lesssim t^{-(\gamma + n/q)/2} \left[\sigma^{1-p^\prime},1 \right]_{A_{p,q}^\gamma} \|f\|_{L^p(\sigma)}.
\]
Since $\ell=\gamma - n(1/p-1/q)$ and
\[
\left[\sigma^{1-p^\prime},1 \right]_{A_{p,q}^\gamma} = \sup_Q |Q|^{\gamma/n - (1/p-1/q)} \lan \sigma^{1-p^\prime} \ran_Q^{1/{p^\prime}} = \sup_Q |Q|^{\gamma/n - (1/p-1/q) - 1/{p^\prime}} \left\|\dfrac{1}{\sigma^{1/p}} \right\|_{L^{p^\prime}(Q)} = \left\| \dfrac{1}{\sigma^{1/p}} \right\|_{\calM^{n/\ell}_{p^\prime}},
\]
the proof is completed.
\begin{flushright}
    $\square$
\end{flushright}

\vspace{5mm}

\section{Proofs of the existence of solutions to Hardy-H\'enon parabolic equation} \label{equation section}
We prove Theorems \ref{local} and \ref{global} by the standard fixed point argument.

\bigskip

\subsection{Proof of Theorem \ref{local}: Local existence}
Let us defnote for $t \in (0,\infty)$,
\[
\|u\|_{X_p(t)} := \sup_{0<s<t} \|u(s)\|_{L^p(\sigma)}
\]
and $\Phi[u](t) := e^{t\Delta}u_0 + D[u](t)$ where 
\[
D[u](t) := \int_0^t e^{(t-s)\Delta} \left(V u(s)^\tau \right) ds = \int_0^t e^{(t-s)\Delta} U(s) ds.
\]
We shall show that $\Phi$ is a contraction mapping on $BC([0,T); L^p(\sigma))$ if $T>0$ is small.

\smallskip

\underline{Step 1:} For any $T \in (0,\infty)$ and $u \in BC([0,T); L^p(\sigma))$, we have
\[
\|\Phi[u]\|_{X_p(T)} \lesssim W T^{1-\gamma/2} \|u\|_{X_p(T)}^\tau.
\]

\smallskip

The $A_p$-condition on $\sigma$ implies that $\|e^{t\Delta} u_0\|_{L^p(\sigma)} \lesssim \|u_0\|_{L^p(\sigma)}$.
By using Theorem \ref{main 1} with $p=r:=p/\tau \in (1,p]$, one obtains that
\begin{equation} \label{loc Lp}
\begin{aligned}
\left\| D[u](t) \right\|_{L^p(\sigma)} & \lesssim \left[\sigma^{1-r^\prime} V^{-r(1-r^\prime)},\sigma \right]_{A_{r,q}^\gamma} \int_0^t (t-s)^{-\gamma/2} \left\| U(s) \right\|_{L^r(\sigma V^{-r})} ds \\
& = W \int_0^t (t-s)^{-\gamma/2} \|u(s)\|_{L^p(\sigma)}^\tau ds \\
& \lesssim t^{1-\gamma/2} W \|u\|_{X_p(t)}^\tau.
\end{aligned}
\end{equation}
The first inequality requires the condition that $\lan |U(s)| \ran_Q \to 0$ as $|Q| \to \infty$.
This condition can be verified from the facts that $U(s) \in L^r(\sigma V^{-r})$ and $\sigma V^{-r} \in A_r$.
See Remark \ref{A_p rem}.
Hence, Step 1 is proved.

\smallskip

\underline{Step 2:} For any $T \in (0,\infty)$ and $u \in BC([0,T);L^p(\sigma))$,
\[
\Phi[u] \in BC([0,T); L^p(\sigma))
\]
satisfying $\Phi[u](t) \to u_0$ in $L^p(\sigma)$ as $t \searrow 0$.

\smallskip

Firstly, our assumption $\sigma \in A_p$ yields that $e^{t\Delta} u_0 \to u_0$ in $L^p(\sigma)$ as $t \searrow 0$ and $e^{t\Delta} u_0 \in BC([0,\infty); L^p(\sigma))$.
The former combining with \eqref{loc Lp} means that $\Phi[u] \to u_0$ in $L^p(\sigma)$ as $t \searrow 0$.
Hence, it suffices to show that $D[u] \in BC((0,T);L^p(\sigma))$.
It is not hard to see the right continuity.
Indeed, it holds that
\begin{align*}
\left\|D[u](t+\vep) - D[u](t) \right\|_{L^p(\sigma)} & \le \left\| \int_0^t e^{(t-s)\Delta} U(s) - e^{(t+\vep-s)\Delta} U(s) ds \right\|_{L^p(\sigma)} + \left\| \int_t^{t+\vep} e^{(t+\vep-s)\Delta} U(s) ds \right\|_{L^p(\sigma)} \\
& \lesssim W \int_0^t (t-s)^{-\gamma/2} \left\| e^{\vep \Delta} U(s) - U(s) \right\|_{L^r(\sigma V^{-r})} ds + W \vep^{1-\gamma/2} \|u\|_{X_p(T)} \\
& \to 0 \ \tn{as}\ \vep \searrow 0,
\end{align*}
where the convergence of the first term follows from Lebesgue's convergence theorem and $U(s) \in L^r(\sigma V^{-r})$.
To see the left continuity, we divide the difference as follows
\[
\left\|D[u](t) - D[u](t-\vep) \right\|_{L^p(\sigma)} \le \left\| \int_0^{t-\vep} e^{(t-s)\Delta} U(s) - e^{(t-\vep-s)\Delta} U(s) ds \right\|_{L^p(\sigma)} + \left\| \int_{t-\vep}^t e^{(t-s)\Delta} U(s) ds \right\|_{L^p(\sigma)}.
\]
This second term converges to $0$ as $\vep \searrow 0$ from the smoothing estimates; Theorem \ref{main 1}.
By a change of variables, we have that the first term can be estimated by a constant multiplied by
\[
W \int_0^{t-\vep} s^{-\gamma/2} \left\| e^{\vep \Delta} U(t-\vep-s) - U(t-\vep-s) \right\|_{L^r(\sigma V^{-r})} ds.
\]
The integrand is controlled as follows;
\begin{align*}
& \left\| e^{\vep \Delta} U(t-\vep-s) - U(t-\vep-s) \right\|_{L^r(\sigma V^{-r})} \\
& \le \left\| e^{\vep \Delta} U(t-\vep-s) - e^{\vep \Delta} U(t-s) \right\|_{L^r(\sigma V^{-r})} + \left\| e^{\vep \Delta}U(t-s) - U(t-s) \right\|_{L^r(\sigma V^{-r})} \\
& + \left\| U(t-s) - U(t-\vep-s) \right\|_{L^r(\sigma V^{-r})} \\
& \lesssim \left\| U(t-s) - U(t-\vep-s) \right\|_{L^r(\sigma V^{-r})} + \left\| e^{\vep \Delta}U(t-s) - U(t-s) \right\|_{L^r(\sigma V^{-r})}.
\end{align*}
Therefore, we see that
\begin{align*}
\lim_{\vep \searrow 0} & \left\|D[u](t) - D[u](t-\vep) \right\|_{L^p(\sigma)} \lesssim W \limsup_{\vep \searrow 0} \int_0^{t-\vep} s^{-\gamma/2} \left\| U(t-s) - U(t-\vep-s) \right\|_{L^r(\sigma V^{-r})} ds \\
& \lesssim W \limsup_{\vep \searrow 0} \int_0^{t-\vep} s^{-\gamma/2} \left\||u(t-s)|^{\tau-1} u(t-s) - |u(t-\vep-s)|^{\tau-1} u(t-\vep-s) \right\|_{L^r(\sigma)} ds \\
& \lesssim W \|u\|_{X_p(t)}^{\tau-1} \limsup_{\vep \searrow 0} \int_0^t s^{-\gamma/2} \left\| u(t-s) - u(t-\vep-s) \right\|_{L^p(\sigma)} ds \\
& =0,
\end{align*}
where we have used the continuity of $u \in BC([0,T); L^p(\sigma))$ in the last equality.

\smallskip

Thus, the mapping $\Phi$ maps $BC([0,T);L^p(\sigma))$ to itself for all $0<T<\infty$.

\smallskip

\underline{Step 3:} For any $T \in (0,\infty)$ and $u,v \in BC([0,T);L^p(\sigma))$,
\[
\|D[u] - D[v]\|_{X_p(T)} \lesssim W T^{1-\gamma/2} \left(\|u\|_{X_p(T)} + \|v\|_{X_p(T)} \right)^{\tau-1} \|u-v\|_{X_p(T)}.
\]

\smallskip

This is done by a similar argument as follows;
\begin{align*}
\left\| D[u](t) - D[v](t) \right\|_{L^p(\sigma)} & \lesssim W \int_0^t (t-s)^{-\gamma/2} \left\| u(s)\tau - v(s)^\tau \right\|_{L^{p/\tau}(\sigma)} ds \\
& \lesssim W \int_0^t (t-s)^{-\gamma/2} \left\| \left(|u(s)|^{\tau-1} + |v(s)|^{\tau-1} \right) \left|u(s) - v(s) \right| \right\|_{L^{p/\tau}(\sigma)} ds \\
& \lesssim W t^{1-\gamma/2} \left(\|u\|_{X_p(t)}^{\tau-1} + \|v\|_{X_p(t)}^{\tau-1} \right) \|u-v\|_{X_p(t)}.
\end{align*}

\smallskip

Therefore, by taking a sufficiently small $T >0$, we see that $\Phi$ is a contraction mapping on $BC([0,T); L^p(\sigma))$.
Hence, we can find a unique solution $u \in BC([0,T);L^p(\sigma))$ fulfilling $u(t) \to u_0$ in $L^p(\sigma)$ as $t \searrow 0$.
\begin{flushright}
    $\square$
\end{flushright}

\bigskip

\subsection{Proof of Theorem \ref{global}: Global existence}
Let us denote for $0<T \le \infty$,
\[
\|u\|_{X_\infty(T)} := \sup_{0<t<T} s^{(\alpha+n/q)/2} \|u(t)\|_{L^\infty},
\]
$\|u\|_{X(T)} := \|u\|_{X_p(T)} + \|u\|_{X_\infty(T)}$ and
\[
X(T) := \left\{u \in BC([0,T); L^p(\sigma)); \|u\|_{X(T)} < \infty \ \tn{and} \ \lim_{t \searrow 0} t^{(\gamma + n/q)/2} \|u(t)\|_{L^\infty} =0 \right\}.
\]
If $u \in X(T)$, then $\lim_{t \searrow 0} \|u\|_{X_\infty(t)} =0$.

From (ii) in Theorem \ref{main 1} and the proof of Theorem \ref{local} above, we have that
\begin{equation} \label{global linear}
\left\| e^{\cdot \Delta}u_0 \right\|_{X(T)} \lesssim (1+W_2) \|u_0\|_{X(T)}.
\end{equation}
Let $u \in X(\infty)$.
We divide the proof into three steps.

\smallskip

\underline{Step 1:} For any $0<T \le \infty$, it holds that
\[
\|D[u]\|_{X(T)} \lesssim (W_1+W_3) \|u\|_{X_p(T)} \|u\|_{X_\infty(T)}^{\tau-1} \to 0\ \tn{as}\ T \searrow 0,
\]
with the implicit constant independent $T$.
Combining with the linear estimate \eqref{global linear}, this estimate means that $\|\Phi[u]\|_{X(\infty)} \lesssim (1+W_2) \|u_0\|_{L^p(\sigma)} + (W_1+W_3) \|u\|_{X_p(\infty)} \|u\|_{X_\infty(\infty)}^{\tau-1}$.

\smallskip

Our assumption yields that if $t<T$, then it holds
\begin{align*}
\|D[u](t)\|_{L^p(\sigma)} & \lesssim W_1 \int_0^t (t-s)^{-\beta/2} \|U(s)\|_{L^p(\sigma V^{-p})} ds \\
& \lesssim W_1 \int_0^t (t-s)^{-\beta/2} \|u(s)\|_{L^{p\tau}(\sigma)}^\tau ds \\
& \le W_1 \int_0^t (t-s)^{-\beta/2} s^{-(\alpha + n/q)(\tau-1)/2} ds \|u\|_{X_p(T)} \|u\|_{X_\infty(T)}^{\tau-1} \\
& \lesssim W_1 \|u\|_{X_p(T)} \|u\|_{X_\infty(T)}^{\tau-1}.
\end{align*}
On the other hand, one can observe that
\begin{align*}
\left\| D[u](t) \right\|_{L^\infty} & \lesssim W_3 \int_0^t (t-s)^{-(\gamma+n/q_1)/2} \left\| U(s) \right\|_{L^p(\sigma V^{-p})} ds \\
& = W_3 \int_0^t (t-s)^{-(\gamma+n/q_1)/2} \left\| u(s) \right\|_{L^{p\tau}(\sigma)}^\tau ds \\
& \lesssim W_3 \int_0^t (t-s)^{-(\beta_1+n/{q_1})} s^{-(\alpha + n/q)(tau-1)} ds \|u\|_{X_p(T)} \|u\|_{X_\infty(T)}^{\tau-1} \\
& \lesssim W_3 t^{-(\alpha + n/q)} \|u\|_{X_p(T)} \|u\|_{X_\infty(T)}^{\tau-1}.
\end{align*}
The proof of Step 1 is completed.

\smallskip

\underline{Step 2:}
\[
\Phi[u] \in BC([0,\infty); L^p(\sigma))
\]
satisfying $\Phi[u](t) \to u_0$ in $L^p(\sigma)$ as $t \searrow 0$.

\smallskip

Since $e^{t\Delta}u_0 \to u_0$ in $L^p(\sigma)$ as $t \searrow 0$, we see that $\Phi[u]$ is continuous in $L^p(\sigma)$ at $t=0$.
We shall show the continuity on $(0,\infty)$.
Let us take $t \in (0,\infty)$ and $0< \vep < t$.
Following the same argument as that in the proof of Theorem \ref{local}, we obtain the right continuity;
\begin{align*}
\left\|D[u](t+\vep) - D[u](t) \right\|_{L^p(\sigma)} & \le \left\| \int_0^t e^{(t-s)\Delta} U(s) - e^{(t+\vep-s)\Delta} U(s) ds \right\| + \left\| \int_t^{t+\vep} e^{(t+\vep-s)\Delta} U(s) ds \right\| \\
& \lesssim W_1 \int_0^t (t-s)^{-\beta/2} \left\| e^{\vep \Delta} U(s) - U(s) \right\|_{L^p(\sigma V^{-p})} ds \\
& + W_1 t^{-\beta/2} \vep^{1 - (\alpha + n/q)(\tau-1)/2} \|u\|_{X(\infty)}^\tau \\
& \to 0 \ \tn{as}\ \vep \searrow 0.
\end{align*}
To see the left continuity, we divide the integral term into two parts as above.
\[
\left\|D[u](t) - D[u](t-\vep) \right\|_{L^p(\sigma)} \le \left\| \int_0^{t-\vep} e^{(t-s)\Delta} U(s) - e^{(t-\vep-s)\Delta} U(s) ds \right\| + \left\| \int_{t-\vep}^t e^{(t-s)\Delta} U(s) ds \right\|.
\]
The second term can be dominated by a constant multiplied by
\[
W_1 \vep^{1-\beta/2} t^{-(\alpha + n/q)(\tau-1)/2} \|u\|_{X(\infty)}^\tau \to 0 \ \tn{as}\ \vep \to 0.
\]
The first term is controlled by a constant multiplied by
\begin{align*}
W_1 & \int_0^{t-\vep} s^{-\beta/2} \left\| e^{\vep \Delta} U(t-\vep-s) - U(t-\vep-s) \right\|_{L^p(\sigma V^{-p})} ds \\
& \lesssim W_1 \int_0^{t-\vep} s^{-\beta/2} \left\| U(t-s) - U(t-\vep-s) \right\|_{L^p(\sigma V^{-p})} + s^{-\beta/2} \left\| e^{\vep \Delta}U(t-s) - U(t-s) \right\|_{L^p(\sigma V^{-p})} ds.
\end{align*}
This second term tends to $0$ as $\vep \searrow 0$ from Lebesgue's convergence theorem.
We divide another term again as follows;
\begin{align*}
\lesssim & W_1 \int_0^{t-\vep} s^{-\beta/2} (t-\vep-s)^{-(\alpha + n/q)(\tau-1)/2} \|u(t-s) - u(t-\vep-s)\|_{L^p(\sigma)} ds \|u\|_{X_\infty(\infty)}^{\tau-1} \\
& = W_1 \left(\int_0^{(t-\vep)/2} + \int_{(t-\vep)/2}^{t-\vep} \right) s^{-\beta/2} (t-\vep -s)^{-(\alpha+n/q)(\tau-1)/2} \|u(t-s) - u(t-\vep-s)\|_{L^p(\sigma)} ds \|u\|_{X_\infty(\infty)}^{\tau-1} \\
& \lesssim W_1(t-\vep)^{-(\alpha+n/q)(\tau-1)/2} \int_0^{t/2} s^{-\beta/2} \|u(t-s) - u(t-\vep-s)\|_{L^p(\sigma)} ds \|u\|_{X_\infty(\infty)}^{\tau-1} \\
& + W_1 (t-\vep)^{-\beta/2} \int_0^{t/2} s^{-(\alpha+n/q)(\tau-1)/2} \|u(t-s) - u(t-\vep-s)\|_{L^p(\sigma)} ds \|u\|_{X_\infty(\infty)}^{\tau-1} \\
& \to 0 \ \tn{as}\ \vep \searrow 0.
\end{align*}
The proof of Step 2 is completed.

\smallskip

\underline{Step 3:} If $u,v \in X(\infty)$, then it holds that
\[
\|D[u] - D[v]\|_{X(\infty)} \lesssim (W_1+W_3) \left(\|u\|_{X_\infty(\infty)} + \|v\|_{X_\infty(\infty)} \right)^{\tau-1} \|u-v\|_{X(\infty)}.
\]

\smallskip

This is shown as follows;
\begin{align*}
\|D[u](t) - & D[v](t)\|_{L^p(\sigma)} \lesssim W_1 \int_0^t (t-s)^{\beta/2} \left\|u(s)^\tau - v(s)^\tau \right\|_{L^p(\sigma)} ds \\
& \lesssim W_1 \int_0^t (t-s)^{-\beta/2} s^{-(\alpha+n/q)(\tau-1)/2} \|u(s) - v(s)\|_{L^p(\sigma)} ds \left(\|u\|_{X_\infty(\infty)} +\|v\|_{X_\infty(\infty)} \right)^{\tau-1} \\
& \lesssim W_1 \left(\|u\|_{X_\infty(\infty)} +\|v\|_{X_\infty(\infty)} \right)^{\tau-1} \|u-v\|_{X_p(\infty)}
\end{align*}
and
\begin{align*}
& \|D[u](t) - D[v](t)\|_{L^\infty} \\
& \lesssim W_3 \int_0^t (t-s)^{-(\beta_1+n/{q_1})/2} s^{-(\alpha + n/q)(\tau-1)/2} \|u(s)-v(s)\|_{L^p(\sigma)} ds \left(\|u\|_{X_\infty(\infty)} +\|v\|_{X_\infty(\infty)} \right)^{\tau-1} \\
& \lesssim W_3 t^{-(\alpha + n/q)} \left(\|u\|_{X_\infty(\infty)} +\|v\|_{X_\infty(\infty)} \right)^{\tau-1} \|u-v\|_{X_p(\infty)}.
\end{align*}

\smallskip

Therefore, from the fixed-point theorem, we can find a unique solution in a small closed ball in $X(\infty)$.
\begin{flushright}
    $\square$
\end{flushright}

\vspace{5mm}

\section{Appendix: Proofs of Corollaries}
For the sake of completeness, we give a sketch of the proofs of the corollaries, \ref{cor}, \ref{cor local}, \ref{cor global}.
To do this, we prepare basic lemmas.
The first concerns the behavior of the $L^p(B)$-norm of the weights $|x|^\alpha$ and $\lan x \ran^\alpha$.
Following Grafakos \cite{G}, we refer to the open ball $B(x_0,R)$ as a type I ball if $|x_0| \ge 3R$, and as a type II ball if $|x_0| < 3R$.
Since the following lemma can be shown by his argument, we omit the proof.

\begin{lem} \label{est on ball}
Let $\alpha \in \R$ and $B=B(x_0,R)$.

\tn{(i)} If $B$ is of type I, it holds that $\||\cdot|^\alpha\|_{L^1(B)} \approx |x_0|^\alpha R^{n}$.

\medskip

\tn{(ii)} If $B$ is of type II, we have that $\||\cdot|^\alpha\|_{L^1(B)} \gtrsim R^{\alpha + n}$.
Moreover, if $-n< \alpha < \infty$, it follows $\||\cdot|^\alpha\|_{L^1(B)} \approx R^{\alpha + n}$.

\medskip

\tn{(iii)} If $B$ is of type I, it holds $\|\lan \cdot \ran^\alpha \|_{L^1(B)} \approx \lan x_0 \ran^\alpha R^n$.

\medskip

\tn{(iv)} If $B$ is of type II, it follows that
\[
R^n (1+R)^\alpha \lesssim \left\|\lan \cdot \ran^\alpha \right\|_{L^1(B)} \lesssim 
\begin{cases}
\; R^n \ & \tn{if}\ R \le 1 \\
\; 1+R^{\alpha + n} \ & \tn{if}\ R>1.
\end{cases}
\]

\medskip

\tn{(v)}
If $B$ is of type I, we see that $\||\cdot|^\alpha \lan \cdot \ran^\beta \|_{L^1(B)} \approx |x_0|^\alpha \lan x_0 \ran^\beta R^n$.

\medskip

\tn{(vi)} Let $\alpha >-n$.
If $B$ is of type II, one sees that
\[
\left\||\cdot|^\alpha \lan \cdot \ran^\beta \right\|_{L^1(B)} \lesssim
\begin{cases}
\; R^{\alpha+n}\ & \tn{if}\ R \le 1 \\
\; 1+R^{\alpha+\beta+n} \ & \tn{if}\ R>1.
\end{cases}
\]
\end{lem}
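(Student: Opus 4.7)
}
The plan is to treat the type I and type II cases by two different techniques. For a type I ball $B(x_0,R)$ with $|x_0|\ge 3R$, the key observation is that the weight is almost constant: for every $y\in B(x_0,R)$, the triangle inequality yields $|y|\in[|x_0|-R,|x_0|+R]\subset[\tfrac{2}{3}|x_0|,\tfrac{4}{3}|x_0|]$, hence $|y|\approx|x_0|$ and likewise $\langle y\rangle\approx\langle x_0\rangle$. Parts (i), (iii), (v) then follow immediately by pulling the weights out of the integral and multiplying by $|B|\approx R^n$.

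For the type II cases, $|x_0|<3R$, so $B(x_0,R)\subset B(0,4R)$, which allows an upper bound via polar coordinates centered at the origin: when $\alpha>-n$, $\int_{B(0,4R)}|y|^\alpha\,dy\approx R^{\alpha+n}$, giving the upper bound in (ii). For the lower bound in (ii), I would locate a sub-ball $B'\subset B(x_0,R)$ of radius $\approx R$ on which $|y|\approx R$; concretely, translate by $c R\,e$ for a unit vector $e$ chosen either parallel to $x_0/|x_0|$ (if $|x_0|\ge R$, yielding $|y|\approx\max(|x_0|,R)\ge R$) or arbitrarily (if $|x_0|<R$, then $B(x_0,R)\supset$ an annular shell $\{R/4\le|y|\le R/2\}\cap B$ of measure $\approx R^n$). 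Either way, the sub-ball contributes $\gtrsim R^{\alpha+n}$. For (iv), I split according to the size of $R$: when $R\le1$ one has $\langle y\rangle\approx1$ throughout $B$, so the estimate reduces to $|B|\approx R^n$; when $R>1$ one writes $B=(B\cap\{|y|\le1\})\sqcup(B\cap\{|y|>1\})$ and notes that on the second piece $\langle y\rangle\approx|y|$, so the upper bound reduces to the computation in (ii) with exponent $\alpha$, producing the $R^{\alpha+n}$ term, while the unit ball contributes the constant $1$. The lower bound comes from the same sub-ball construction combined with $\langle y\rangle\ge 1+R/2$ there. Finally (vi) is handled by combining the two devices: bound $\langle y\rangle^\beta$ by $\langle R\rangle^\beta$ (or $1$, depending on the sign of $\beta$ and the size of $R$) and then apply (ii) to $|y|^\alpha$.

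The main obstacle is the lower bound in (ii) and the sharpness tracking in (iv) and (vi). The delicate point is that the position of $x_0$ within $\{|x_0|<3R\}$ ranges continuously: when $|x_0|$ is close to $3R$ the ball $B$ sits in a thick annulus where $|y|\approx R$ and the lower bound is trivial, but when $|x_0|<R$ the ball contains $0$ and, for very negative $\alpha$ (still with $\alpha>-n$), the mass is concentrated near the origin; one must verify that the lower bound $R^{\alpha+n}$ holds uniformly in the position of $x_0$, which is exactly why the sub-ball argument above has to be set up case by case. The rest of the lemma is routine once the two underlying mechanisms (near-constancy on type I, polar-coordinate computation on type II) are in place.
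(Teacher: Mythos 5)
The paper explicitly omits the proof of this lemma, deferring to the argument in Grafakos's book, so there is no in-paper proof to compare against; what follows evaluates your argument on its own. Your overall strategy is the right one: near-constancy of the weight on type~I balls handles (i), (iii), (v); the containment $B\subset B(0,4R)$ plus polar coordinates gives the type~II upper bound in (ii); a sub-ball on which $|y|\approx R$ gives the type~II lower bound; and splitting at $|y|=1$ handles (iv) when $R>1$. Two places need attention, one of which is a genuine gap.

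For the lower bound in (ii), the sub-case reasoning is shakier than it looks. When $|x_0|$ is close to $R$ the shell $\{R/4\le|y|\le R/2\}$ need not lie in $B(x_0,R)$ (take $y\perp x_0$ with $|y|=R/2$ and $|x_0|\to R$: then $|y-x_0|^2\to 5R^2/4>R^2$), so the claim that its intersection with $B$ has measure $\approx R^n$ requires a further argument. A uniform construction avoids the case split entirely: set $B'=B\bigl(x_0+\tfrac{3R}{4}e,\tfrac{R}{4}\bigr)$ with $e=x_0/|x_0|$ (any unit vector if $x_0=0$). Then $B'\subset B$, $|B'|\approx R^n$, and for $y\in B'$ one has $R/2\le|y|\le 4R$, valid for every type~II ball. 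Relatedly, the inequality $\langle y\rangle\ge 1+R/2$ you invoke for the lower bound in (iv) is literally false (one always has $\sqrt{1+t^2}<1+t$ for $t>0$); what you want and what is true is $\langle y\rangle\approx 1+R$ on $B'$, which does give $R^n(1+R)^\alpha$ for either sign of $\alpha$.

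The genuine gap is (vi). Your plan — bound $\langle y\rangle^\beta$ by $\langle R\rangle^\beta$ if $\beta\ge0$ or by $1$ if $\beta<0$, then apply (ii) — fails for $\beta<0$ and $R>1$: the crude bound $\langle y\rangle^\beta\le 1$ yields $\int_B|y|^\alpha\langle y\rangle^\beta\,dy\lesssim R^{\alpha+n}$, which is larger than the asserted $1+R^{\alpha+\beta+n}$ by a factor $R^{-\beta}$. Concretely, with $\alpha=0$, $\beta=-(n+1)$ your bound gives $R^n$, while the lemma asserts $\lesssim 1$. The fix is exactly the dichotomy you already used in (iv): split $B$ at $|y|=1$. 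On $B\cap\{|y|\le 1\}$ the integrand is $\approx|y|^\alpha$ and contributes $\lesssim 1$ since $\alpha>-n$; on $B\cap\{1<|y|<4R\}$ one has $|y|^\alpha\langle y\rangle^\beta\approx|y|^{\alpha+\beta}$, contributing $\lesssim 1+R^{\alpha+\beta+n}$ by polar coordinates. (As a side remark not chargeable to you: both (iv) and (vi) degenerate at the boundary exponent $\alpha=-n$ resp.\ $\alpha+\beta=-n$, where the outer piece is $\approx\log R$ rather than bounded; the lemma is implicitly read away from those exponents.)
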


\medskip

By using Lemma \ref{est on ball}, one can observe the following.
It is very well-known that $|x|^\alpha \in A_p$ equivalents $-n < \alpha < n(p-1)$ for $p \in (1,\infty)$.
\begin{lem} \label{non-homo Ap}
Let $1 < p < \infty$ and $\alpha, \beta \in \R$.
If
\[
-n \le \alpha + \beta \le n(p-1) \ \tn{and}\ -n < \beta < n(p-1),
\]
then one can see that $w(x) := \lan x \ran^\alpha |x|^\beta \in A_p$.
\end{lem}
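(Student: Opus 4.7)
The plan is to verify the $A_p$-condition
\[
[w]_{A_p} = \sup_B \lan w \ran_B \lan w^{1-p^\prime} \ran_B^{p-1} < \infty
\]
directly on balls $B = B(x_0,R)$, splitting into type~I and type~II cases and applying Lemma~\ref{est on ball}. Observe first that $w^{1-p^\prime}(x) = \lan x \ran^{\alpha(1-p^\prime)} |x|^{\beta(1-p^\prime)}$ has the same form as $w$ with exponents scaled by $1-p^\prime$, and that $(1-p^\prime)(p-1) = -1$. Consequently, whenever a power $R^\gamma$ (or $|x_0|^\gamma$, or $\lan x_0 \ran^\gamma$) appears in the estimate of $\lan w \ran_B$, the analogous estimate for $\lan w^{1-p^\prime} \ran_B$ carries the exponent $\gamma(1-p^\prime)$; after raising to the $(p-1)$-th power and multiplying, the exponents telescope to zero. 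This cancellation is what the proof ultimately relies on.

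On a type~I ball we have $|x| \approx |x_0|$ and $\lan x \ran \approx \lan x_0 \ran$ uniformly on $B$, so item (v) of Lemma~\ref{est on ball} yields $\lan w \ran_B \approx |x_0|^\beta \lan x_0 \ran^\alpha$ and, applied to the dual exponents, $\lan w^{1-p^\prime} \ran_B \approx |x_0|^{\beta(1-p^\prime)} \lan x_0 \ran^{\alpha(1-p^\prime)}$. Multiplying the latter raised to $p-1$ by the former produces a quantity identically equal to $1$, so the bound holds on all type~I balls without using any restriction on $\alpha,\beta$ beyond the fact that $0 \notin B$, which is automatic.

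The type~II analysis is where the hypotheses are truly needed, and I would split further by whether $R \le 1$ or $R > 1$. For small type~II balls $\lan x \ran \approx 1$ on $B$, so item (vi) of Lemma~\ref{est on ball} gives $\lan w \ran_B \lesssim R^\beta$ (requiring $\beta > -n$) and $\lan w^{1-p^\prime} \ran_B \lesssim R^{\beta(1-p^\prime)}$ (requiring $\beta(1-p^\prime) > -n$, equivalently $\beta < n(p-1)$); the telescoping above then yields a uniform bound. For large type~II balls, item (vi) provides only $\int_B |x|^\beta \lan x \ran^\alpha\,dx \lesssim 1 + R^{\alpha+\beta+n}$, so dividing by $|B| \approx R^n$ produces the two terms $R^{-n}$ and $R^{\alpha+\beta}$. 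Using $\alpha + \beta \ge -n$ and $R > 1$ I would absorb the first into the second, obtaining $\lan w \ran_B \lesssim R^{\alpha+\beta}$; symmetrically, $\alpha+\beta \le n(p-1)$, which is equivalent to $(\alpha+\beta)(1-p^\prime) \ge -n$, yields $\lan w^{1-p^\prime} \ran_B \lesssim R^{(\alpha+\beta)(1-p^\prime)}$, and the cancellation bounds the product.

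The only step that is not pure bookkeeping is the large type~II case, where both endpoint conditions on $\alpha+\beta$ are consumed in an essential way, one to suppress the $R^{-n}$ tail of $\lan w \ran_B$ and the other to suppress the corresponding tail of $\lan w^{1-p^\prime} \ran_B$. No conceptual obstacle appears beyond this; the argument amounts to a checklist-style verification once Lemma~\ref{est on ball} is in hand.
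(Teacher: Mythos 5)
Your proposal is correct and follows essentially the same route as the paper's proof: reduce to Lemma \ref{est on ball}, use the telescoping identity $(1-p')(p-1)=-1$ for the exact cancellation on type I balls and small type II balls, and then bound the two surviving powers $R^{\alpha+\beta-n(p-1)}$ and $R^{-(\alpha+\beta)-n}$ on large type II balls using the two endpoint hypotheses on $\alpha+\beta$. Your early-absorption reorganization of the $R>1$ case is a cosmetic variant of the paper's expansion; the underlying bookkeeping and the way each hypothesis is consumed match the paper exactly.
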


\begin{proof}
From Lemma \ref{est on ball}, 
\begin{align*}
[w]_{A_p} & \approx \sup_{B=B(x_0,R)} |B|^{-p} \left\| \lan \cdot \ran^\alpha |\cdot|^\beta \right\|_{L^1(B)} \left\| \lan \cdot \ran^{\alpha (1-p^\prime)} |\cdot|^{\beta(1-p^\prime)} \right\|_{L^1(B)}^{p-1} \\
& \lesssim 1 + \sup_{\substack{B:\tn{type II} \\ R>1}} \left(R^{\alpha + \beta - n(p-1)} + R^{-(\alpha + \beta)-n} \right) \\
& \lesssim 1.
\end{align*}
\end{proof}

\bigskip

\subsection{Proof of Corollary \ref{cor}}
(i) We check the following three conditions
\[
\begin{cases}
\; \circ \ \sigma(x)^{1-p^\prime} = |x|^{\alpha p(1-p^\prime)} \in A_\infty \\
\; \circ \ w(x) = |x|^{\beta q} \in A_\infty \\
\; \circ \ W = \left[|\cdot|^{\alpha p(1-p^\prime)}, |\cdot|^{\beta q} \right]_{A_{p,q}^\gamma} < \infty.
\end{cases}
\]
The first two conditions are that
\[
-n < \alpha (1-p^\prime)p < \infty \iff -\infty <\alpha < n \left(1-\dfrac{1}{p} \right)
\]
and
\[
-n < \beta q < \infty \iff - \dfrac{n}{q} < \beta < \infty,
\]
respectively.
By using Lemma \ref{est on ball}, we have that
\[
W \lesssim \sup_{|x_0| \ge 3 R} R^{\gamma - \alpha + \beta - n(1/p-1/q)} \left(\dfrac{R}{|x_0|} \right)^{\alpha-\beta} + \sup_{R>0} R^{\gamma - \alpha + \beta - n(1/p-1/q)}.
\]
Therefore, the conditions
\[
\alpha - \beta \ge 0 \ \tn{and} \ \gamma = \alpha - \beta + n \left(\dfrac{1}{p} - \dfrac{1}{q} \right)
\]
ensure $W < \infty$.

\medskip

(ii)
We shall check the following three conditions
\[
\begin{cases}
\; \circ \ \sigma(x)^{1-p^\prime} = \lan x \ran^{\alpha p(1-p^\prime)} \in A_\infty \\
\; \circ \ w(x) = |x|^{\beta q} \in A_\infty \\
\; \circ \ W = \left[\lan \cdot \ran^{\alpha p(1-p^\prime)}, |\cdot|^{\beta q} \right]_{A_{p,q}^\gamma} < \infty.
\end{cases}
\]
The first two conditions are that from Lemma \ref{non-homo Ap}
\[
-n \le \alpha (1-p^\prime)p < \infty \iff -\infty <\alpha \le n \left(1-\dfrac{1}{p} \right)
\]
and
\[
-n < \beta q < \infty \iff - \dfrac{n}{q} < \beta < \infty,
\]
respectively.
By using Lemma \ref{est on ball} and the fact $\alpha \ge 0$, we have that
\begin{align*}
W & \lesssim \sup_{\substack{|x_0| \ge 3R \\ R \le 1}} R^{\gamma - n(1/p-1/q)+\beta} \left(\dfrac{R}{|x_0|} \right)^{-\beta} + \sup_{\substack{|x_0| \ge 3 R \\ R > 1}} R^{\gamma-n(1/p-1/q) - (\alpha - \beta)} \left(\dfrac{R}{|x_0|} \right)^{\alpha-\beta} \\
& + \sup_{R \le 1} R^{\gamma-n(1/p-1/q) + \beta} + \sup_{R > 1} R^{\gamma -n(1/p-1/q)- (\alpha - \beta)}.
\end{align*}
Hence, we need the conditions
\[
\beta \le 0 \le \alpha \ \tn{and} \ n \left(\dfrac{1}{p} - \dfrac{1}{q} \right) - \beta \le \gamma \le n \left(\dfrac{1}{p} - \dfrac{1}{q} \right) + (\alpha - \beta) 
\]
to obtain $W < \infty$.

\medskip

(iii) We verify the following three conditions
\[
\begin{cases}
\; \circ \ \sigma(x)^{1-p^\prime} = |x|^{\alpha p(1-p^\prime)} \in A_\infty \\
\; \circ \ w(x) = \lan x \ran^{\beta q} \in A_\infty \\
\; \circ \ W = \left[|\cdot|^{\alpha p(1-p^\prime)}, \lan \cdot \ran^{\beta q} \right]_{A_{p,q}^\gamma} < \infty.
\end{cases}
\]
The first two conditions are that
\[
-n < \alpha (1-p^\prime)p < \infty \iff -\infty <\alpha < n \left(1-\dfrac{1}{p} \right)
\]
and
\[
-n \le \beta q < \infty \iff - \dfrac{n}{q} \le \beta < \infty,
\]
respectively.
We have from $\beta \le 0$,
\begin{align*}
W & \lesssim \sup_{\substack{|x_0| \ge 3 R \\ R \le 1}} R^{\gamma + \beta -n(1/p-1/q)} + \sup_{\substack{|x_0| \ge 3 R \\ R \ge 1}} R^{\gamma - \alpha + \beta -n(1/p-1/q)} \left(\dfrac{R}{|x_0|} \right)^{\alpha-\beta} \\
& + \sup_{R \le 1} R^{\gamma - \alpha - n(1/p-1/q)} + \sup_{R \ge 1} R^{\gamma - \alpha + \beta - n(1/p-1/q)}.
\end{align*}
Then, our conditions
\[
\beta \le 0 \le \alpha \ \tn{and}\ n \left(\dfrac{1}{p} - \dfrac{1}{q} \right) + \alpha \le \gamma \le n \left(\dfrac{1}{p} - \dfrac{1}{q} \right) + (\alpha - \beta)
\]
mean $W < \infty$.

\medskip

(iv) We shall show the following three conditions
\[
\begin{cases}
\; \circ \ \sigma(x)^{1-p^\prime} = \lan x \ran^{\alpha p(1-p^\prime)} \in A_\infty \\
\; \circ \ w(x) = \lan x \ran^{\beta q} \in A_\infty \\
\; \circ \ W = \left[\lan \cdot \ran^{\alpha p(1-p^\prime)}, \lan \cdot \ran^{\beta q} \right]_{A_{p,q}^\gamma} < \infty.
\end{cases}
\]
The first two conditions are that
\[
-n \le \alpha (1-p^\prime)p < \infty \iff -\infty <\alpha \le n \left(1-\dfrac{1}{p} \right)
\]
and
\[
-n \le \beta q < \infty \iff - \dfrac{n}{q} \le \beta < \infty,
\]
respectively.
We have from $\alpha - \beta \ge 0$,
\begin{align*}
W & \lesssim \sup_{\substack{|x_0| \ge 3R \\ R \le 1}} R^{\gamma - n(1/p-1/q)} + \sup_{\substack{|x_0| \ge 3 R \\ R \ge 1}} R^{\gamma - \alpha + \beta - n(1/p-1/q)} \left(\dfrac{R}{|x_0|} \right)^{\alpha-\beta} \\
& + \sup_{R \le 1} R^{\gamma -n(1/p-1/q)} + \sup_{R \ge 1} R^{\gamma - \alpha + \beta - n(1/p-1/q)}.
\end{align*}
One sees $W < \infty$ from the assumptions
\[
\beta \le \alpha \ \tn{and}\ n \left(\dfrac{1}{p} - \dfrac{1}{q} \right) \le \gamma \le n \left(\dfrac{1}{p} - \dfrac{1}{q} \right) + (\alpha - \beta).
\]
\begin{flushright}
    $\square$
\end{flushright}

\bigskip

\subsection{Proof of Corollary \ref{cor local}}
We have to check the following three conditions
\[
\begin{cases}
\; \circ \ \sigma(x) = |x|^\alpha \in A_p \\
\; \circ \ \sigma(x) V(x)^{-r} = |x|^{\alpha - \gamma r} \in A_r,\ \tn{where}\ r:=\dfrac{p}{\tau} \ge 1, \\
\; \circ \ W = \left[\sigma^{1-r^\prime} V^{r^\prime}, \sigma \right]_{A_{r,p}^\beta} < \infty,
\end{cases}
\]
with some $\beta$ satisfying $n(\tau-1)/p \le \beta \le n(1-1/p)$ and $\beta < 2$.
The first two conditions are equivalent to
\[
-n < \alpha < n(p-1) \ \tn{and}\ \dfrac{n+\alpha}{p}\tau - n < \gamma < \dfrac{n+\alpha}{p} \tau.
\]
By using Lemma \ref{est on ball}, we can observe that
\[
W \lesssim \sup_{B: \tn{type I}} R^{\beta - (n+\alpha)(\tau-1)/p + \gamma} \left(\dfrac{R}{|x_0|} \right)^{\alpha(1/r-1/p)-\gamma} + \sup_{B:\tn{type II}} R^{\beta - (n+\alpha)(\tau-1)/p + \gamma}.
\]
Hence, if we denote $\beta := (n+\alpha)(\tau-1)/p - \gamma$, it follows from the assumptions that $\beta$ satisfies the desired conditions, and then $W \lesssim 1$.

\medskip

(ii) We shall check the following three conditions
\[
\begin{cases}
\; \circ \ \sigma(x) = \lan x \ran^\alpha \in A_p \\
\; \circ \ \sigma(x) V(x)^{-r} = \lan x \ran^\alpha|x|^{- \gamma r} \in A_r,\ \tn{where}\ r:=\dfrac{p}{\tau} \ge 1, \\
\; \circ \ W = \left[\lan \cdot \ran^{\alpha(1-r^\prime)} |\cdot|^{\gamma r^\prime}, \lan \cdot \ran^\alpha \right]_{A_{r,p}^\beta} < \infty,
\end{cases}
\]
with some $\beta$ satisfying $n(\tau-1)/p \le \beta \le n(1-1/p)$ and $\beta < 2$.
The first two conditions are equivalent to
\[
-n \le \alpha \le n(p-1),
\]
and
\[
\dfrac{n+\alpha}{p} \tau -n \le \gamma \le \dfrac{n+\alpha}{p}\tau \ \tn{and}\ n \left(1-\dfrac{\tau}{p} \right) -n < \gamma < n \left(1-\dfrac{\tau}{p} \right),
\]
respectively.
By using Lemma \ref{est on ball} and the fact $0 \le \alpha$, one obtains that
\begin{align*}
W & \lesssim \sup_{\substack{B: \tn{type I}\\ R \le 1}} R^{\beta - n(\tau-1)/p + \gamma} \left(\dfrac{R}{|x_0|} \right)^{-\gamma} + \sup_{\substack{B:\tn{type I} \\ R > 1}} R^{\beta - (n+\alpha)(\tau-1)/p + \gamma} \left(\dfrac{R}{|x_0|} \right)^{\alpha(\tau-1)/p - \gamma} \\
& + \sup_{R \le 1} R^{\beta -n(\tau-1)/p+\gamma} + \sup_{R>1} \left(R^{\beta - n} + R^{\beta - \alpha(\tau-1)/p -n + \gamma} \right).
\end{align*}
Our assumptions guarantee that $W \lesssim 1$.

\medskip

(iii) We verify the following three conditions
\[
\begin{cases}
\; \circ \ \sigma(x) = |x|^\alpha \in A_p \\
\; \circ \ \sigma(x) V(x)^{-r} = |x|^\alpha \lan x \ran|^{- \gamma r} \in A_r,\ \tn{where}\ r:=\dfrac{p}{\tau} \ge 1, \\
\; \circ \ W = \left[| \cdot |^{\alpha(1-r^\prime)} \lan \cdot \ran^{\gamma r^\prime}, |\cdot|^\alpha \right]_{A_{r,p}^\beta} < \infty,
\end{cases}
\]
with some $\beta$ satisfying $n(\tau-1)/p \le \beta \le n(1-1/p)$ and $\beta < 2$.
The first two conditions are equivalent to
\[
-n \le \alpha \le n(p-1),
\]
and
\[
 -n \le \alpha \le n \left(\dfrac{p}{\tau} - 1 \right) \ \tn{and}\ \dfrac{n+\alpha}{p}\tau - n \le \gamma \le \dfrac{n+\alpha}{p} \tau,
\]
respectively.
By using Lemma \ref{est on ball} and the fact $\gamma \le 0$, one obtains that
\begin{align*}
W & \lesssim \sup_{\substack{B: \tn{type I}\\ R \le 1}} R^{\beta - (n+\alpha)(\tau-1)/p} \left(\dfrac{R}{|x_0|} \right)^{\alpha(\tau-1)/p} + \sup_{\substack{B:\tn{type I} \\ R > 1}} R^{\beta - (n+\alpha)(\tau-1)/p + \gamma} \left(\dfrac{R}{|x_0|} \right)^{\alpha(\tau-1)/p - \gamma} \\
& + \sup_{R \le 1} R^{\beta - (n+\alpha)(\tau-1)/p} + \sup_{R>1} \left(R^{\beta + (n+\alpha)/p - n} + R^{\beta - (n+\alpha)\tau/p + \gamma} \right).
\end{align*}
The assumptions ensure that $W \lesssim 1$.

\medskip

(iv) We show the following three conditions
\[
\begin{cases}
\; \circ \ \sigma(x) = \lan x \ran^\alpha \in A_p \\
\; \circ \ \sigma(x) V(x)^{-r} = \lan x \ran^{\alpha - \gamma r} \in A_r,\ \tn{where}\ r:=\dfrac{p}{\tau} \ge 1, \\
\; \circ \ W = \left[\lan \cdot \ran^{\alpha(1-r^\prime) + \gamma r^\prime} , \lan \cdot \ran^\alpha \right]_{A_{r,p}^\beta} < \infty,
\end{cases}
\]
with some $\beta$ satisfying $n(\tau-1)/p \le \beta \le n(1-1/p)$ and $\beta < 2$.
The first two conditions are equivalent to
\[
-n \le \alpha \le n(p-1) \ \tn{and}\ \dfrac{n+\alpha}{p} \tau - n \le \gamma \le \dfrac{n+\alpha}{p} \tau,
\]
respectively.
By using Lemma \ref{est on ball} and the fact $\gamma \le 0$, one obtains that
\[
W \lesssim \sup_{B: \tn{type I}} R^{\beta - (n+\alpha)(\tau-1)/p + \gamma} \left(\dfrac{R}{\lan x_0 \ran} \right)^{\alpha(\tau-1)/p - \gamma} + \sup_{R \le 1} R^{\beta - n(\tau-1)/p} + \sup_{R>1} \left(R^{\beta-n} + R^{\beta -(n+\alpha)(\tau-1)/p + \gamma} \right).
\]
The assumptions ensure that $W \lesssim 1$.
\begin{flushright}
    $\square$
\end{flushright}

\bigskip

\subsection{Proof of Corollary \ref{cor global}}
(i) We verify the following five conditions
\[
\begin{cases}
\; \circ \ \sigma(x) = |x|^\alpha \in A_p, \\
\; \circ \ \sigma(x) V(x)^{-p} = |x|^{\alpha-\gamma p} \in A_p, \\
\; \circ \ W_1= \left[|\cdot|^{\alpha (1-p^\prime) + \gamma p^\prime},\ |\cdot|^{\alpha} \right]_{A^\theta_{p,p}} < \infty, \\
\circ \ W_2= \left\| |\cdot|^{-\alpha/p} \right\|_{\calM^{n/\ell}_{p^\prime}} < \infty,\\
\; \circ \ W_3 = \left\| |\cdot|^{-\alpha/p+\gamma} \right\|_{\calM^{n/{\ell_1}}_{p^\prime}} < \infty.
\end{cases}
\]
The first conditions are equivalent to
\[
-n < \alpha < n(p-1) \ \tn{and} \ \dfrac{n+\alpha}{p} -n < \gamma< \dfrac{n+\alpha}{p}.
\]
We can see that
\[
W_1 \lesssim \sup_{|x_0| \ge 3 R} R^{\theta + \gamma} \left(\dfrac{R}{|x_0|} \right)^{-\gamma} + \sup_{R>0} R^{\theta+\gamma},
\]
\[
W_2 \lesssim \sup_{|x_0| \ge 3 R} R^{\ell - \alpha/p} \left(\dfrac{R}{|x_0|} \right)^{\alpha/p} + \sup_{R>0} R^{\ell-\alpha/p}
\]
and
\[
W_3 \lesssim \sup_{|x_0| \ge 3R} R^{\ell_1 - \alpha/p+\gamma} \left(\dfrac{R}{|x_0|} \right)^{\alpha/p - \gamma} + \sup_{R>0} R^{\ell_1-\alpha/p+\gamma}.
\]
Our conditions
\[
\gamma \le 0 \le \alpha \ \tn{and}\ \beta = \dfrac{2+\gamma}{\tau-1} - \dfrac{n}{q} = n \left(\dfrac{1}{p} - \dfrac{1}{q} \right) + \dfrac{\alpha}{p},
\]
are sufficient for ensuring that for  $W_1+W_2<\infty$.
In this case, we have
\[
\gamma = \dfrac{n+\alpha}{p}(\tau-1) -2.
\]

\medskip

(ii) We shall verify the following five conditions
\[
\begin{cases}
\; \circ \ \sigma(x)= \lan x \ran^\alpha \in A_p, \\
\; \circ \ \sigma(x) V(x)^{-p} = \lan x \ran^\alpha |x|^{-\gamma p} \in A_p, \\
\; \circ \ W_1= \left[ \lan \cdot \ran^{\alpha (1-p^\prime)} |\cdot|^{\gamma p^\prime},\ \lan \cdot \ran^{\alpha} \right]_{A^\theta_{p,p}} < \infty, \\
\circ \ W_2= \left\| \lan \cdot \ran^{-\alpha/p} \right\|_{\calM^{n/\ell}_{p^\prime}} < \infty, \\
\; \circ \ W_3 = \left\| \lan \cdot \ran^{-\alpha/p} |\cdot|^{\gamma} \right\|_{\calM^{n/{\ell_1}}_{p^\prime}} < \infty.
\end{cases}
\]
The first two conditions are equivalent to
\[
-n \le \alpha \le n(p-1),\ \dfrac{n+\alpha}{p} -n \le \gamma \le \dfrac{n+\alpha}{p} \ \tn{and}\ \dfrac{n}{p} -n < \gamma < \dfrac{n}{p}.
\]
We have that
\[
W_1 \lesssim \sup_{B: \tn{type I}} R^{\theta + \gamma} \left(\dfrac{R}{|x_0|} \right)^{-\gamma} + \sup_{R \le 1} R^{\theta + \gamma} + \sup_{R \ge 1} \left(R^{\theta+\gamma} + R^{\theta-n + (n+\alpha)/p} \right),
\]
\begin{align*}
W_2 \lesssim \sup_{\substack{B:\tn{type I} \\ R \le 1}} R^\ell + \sup_{\substack{B:\tn{type I} \\ R \ge 1}} R^{\ell -\alpha/p} \left(\dfrac{R}{|x_0|} \right)^{\alpha/p} + \sup_{R \le 1} R^\ell + \sup_{R \ge 1} \left(R^{\ell - n(1-1/p)} + R^{\ell - \alpha/p} \right)
\end{align*}
and
\begin{align*}
W_3 & \lesssim \sup_{\substack{B:\tn{type I} \\ R \le 1}} R^{\ell_1 + \gamma} \left(\dfrac{R}{|x_0|} \right)^{-\gamma} + \sup_{\substack{B:\tn{type I} \\ R \ge 1}} R^{\ell_1 -\alpha/p + \gamma} \left(\dfrac{R}{|x_0|} \right)^{\alpha/p-\gamma} \\
& + \sup_{R \le 1} R^{\ell_1 + \gamma } + \sup_{R \ge 1} \left(R^{\ell_1 - n(1-1/p)} + R^{\ell_1 - \alpha/p + \gamma} \right).
\end{align*}
Here, we have used the fact that $0 \le \alpha$ for the second and third estimates. 
To denote
\[
\beta = \dfrac{2+\gamma}{\tau-1} - \dfrac{n}{q}.
\]
is sufficient to ensure that $W_1 < \infty$.

\medskip

(iii) We shall show the following conditions
\[
\begin{cases}
\; \circ \ \sigma(x) = |x|^\alpha \in A_p, \\
\; \circ \ \sigma(x) V(x)^{-p} = |x|^\alpha \lan x \ran^{-\gamma p} \in A_p, \\
\; \circ \ W_1= \left[|\cdot|^{\alpha (1-p^\prime)} \lan \cdot \ran^{\gamma p^\prime},\ |\cdot|^{\alpha} \right]_{A^\theta_{p,p}} < \infty, \\
\circ \ W_2= \left\| |\cdot|^{-\alpha/p} \right\|_{\calM^{n/\ell}_{p^\prime}} < \infty,\\
\; \circ \ W_3 = \left\| |\cdot|^{-\alpha/p} \lan \cdot \ran^\gamma \right\|_{\calM^{n/{\ell_1}}_{p^\prime}} < \infty.
\end{cases}
\]
The first conditions are equivalent to
\[
-n < \alpha < n(p-1),\ \dfrac{n+\alpha}{p} -n \le \gamma \le \dfrac{n+\alpha}{p} \ \tn{and} \ \dfrac{n}{p} -n < \gamma< \dfrac{n}{p}.
\]
From Lemma \ref{est on ball}, one obtains that
\[
W_1 \lesssim \sup_{\substack{B:\tn{type I} \\ R \le 1}} R^\theta + \sup_{\substack{B:\tn{type I} \\ R \ge 1}} R^{\theta + \gamma} \left(\dfrac{R}{|x_0|} \right)^{-\gamma} + \sup_{R \le 1} R^\theta + \sup_{R \ge 1} \left(R^{\theta + (n+\alpha)/p -n} + R^{\theta + \gamma} \right),
\]
\[
W_2 \lesssim \sup_{B: \tn{type I}} R^{\ell - \alpha/p} \left(\dfrac{R}{|x_0|} \right)^{\alpha/p} + \sup_{R>0} R^{\ell - \alpha/p}
\]
and
\begin{align*}
W_3 & \lesssim \sup_{\substack{B:\tn{type I} \\ R \le 1}} R^{\ell_1-\alpha/p} \left(\dfrac{R}{|x_0|} \right)^{\alpha/p} + \sup_{\substack{B:\tn{type I} \\ R \ge 1}} R^{\ell_1-\alpha/p+\gamma} \left(\dfrac{R}{|x_0|} \right)^{\alpha/p-\gamma} \\
& + \sup_{R \le 1} R^{\ell_1-\alpha/p} + \sup_{R \ge 1} \left(R^{\ell_1-n(1-1/p)} + R^{\ell_1-\alpha/p+\gamma} \right).
\end{align*}
Here, we have used the fact $\gamma \le 0$ for the estimates of $W_1$ and $W_3$.
To denote
\[
0 \le \alpha \ \tn{and}\ \beta = \dfrac{2-\alpha/p}{\tau-1} - \dfrac{n}{q}.
\]
is sufficient to ensure that $W_2 < \infty$.

\medskip

(iv) Our task is to verify the following five conditions
\[
\begin{cases}
\; \circ \ \sigma(x) = \lan x \ran^\alpha \in A_p, \\
\; \circ \ \sigma(x) V(x)^{-p} = \lan x \ran^{\alpha-\gamma p} \in A_p, \\
\; \circ \ W_1= \left[\lan \cdot \ran^{\alpha (1-p^\prime) + \gamma p^\prime},\ \lan \cdot \ran^{\alpha} \right]_{A^\theta_{p,p}} < \infty, \\
\circ \ W_2= \left\| \lan \cdot \ran^{-\alpha/p} \right\|_{\calM^{n/\ell}_{p^\prime}} < \infty,\\
\; \circ \ W_3 = \left\| \lan \cdot \ran^{-\alpha/p+\gamma} \right\|_{\calM^{n/{\ell_1}}_{p^\prime}} < \infty.
\end{cases}
\]
The first conditions are equivalent to
\[
-n \le \alpha \le n(p-1) \ \tn{and} \ \dfrac{n+\alpha}{p} -n \le \gamma \le \dfrac{n+\alpha}{p}.
\]
Lemma \ref{est on ball} yields that
\[
W_1 \lesssim \sup_{\substack{B: \tn{type I} \\ R \le 1}} R^\theta + \sup_{\substack{B: \tn{type I} \\ R \ge 1}} R^{\theta + \gamma} \left(\dfrac{R}{|x_0|} \right)^{-\gamma} + \sup_{R \le 1} R^\theta + \sup_{R \ge 1} \left(R^{\theta - n + (n+\alpha)/p} + R^{\theta + \gamma} \right),
\]
\[
W_2 \lesssim \sup_{\substack{B: \tn{type I} \\ R \le 1}} R^\ell + \sup_{\substack{B: \tn{type I} \\ R \ge 1}} R^{\ell - \alpha/p} \left(\dfrac{R}{|x_0|} \right)^{\alpha/p} + \sup_{R \le 1} R^\ell + \sup_{R \ge 1} \left(R^{\ell - n(1-1/p)} + R^{\ell - \alpha/p} \right)
\]
and
\[
W_3 \lesssim \sup_{\substack{B: \tn{type I} \\ R \le 1}} R^{\ell_1} + \sup_{\substack{B: \tn{type I} \\ R \ge 1}} R^{\ell_1 - \alpha/p + \gamma} \left(\dfrac{R}{|x_0|} \right)^{\alpha/p-\gamma} + \sup_{R \le 1} R^{\ell_1} + \sup_{R \ge 1} \left(R^{\ell_1 -n(1-1/p)} + R^{\ell_1 - \alpha/p + \gamma} \right).
\]
We have used the fact $\gamma \le 0$ in the estimate of $W_1$, the fact $0 \le \alpha$ in the estimate of $W_2$, and the fact $\gamma \le 0 \le \alpha$ in the estimate of $W_3$.
The condition $W_1+W_2+W_3 < \infty$ is verified from the condition on $\beta$;
\[
\max \left(\dfrac{2+\gamma}{\tau-1}, \dfrac{n}{p}, \dfrac{2+\gamma-n/p}{\tau-2} \right) \le \beta \le \min \left(\dfrac{2}{\tau-1}, \dfrac{n+\alpha}{p}, \dfrac{2-n/p}{\tau-2} \right).
\]
The existence of such $\beta$ is guaranteed by our assumptions.
\begin{flushright}
    $\square$
\end{flushright}

\vspace{5mm}

\subsection*{Acknowledgements}
This work was supported by JSPS KAKENHI Grant Numbers 18KK0072, 20H01815, 23K03181.

\end{document}